\documentclass[11pt]{amsart}
\usepackage{amssymb, graphicx, color}
\usepackage{amsfonts}
\usepackage{amsthm}
\usepackage{enumerate}
\usepackage[mathscr]{eucal}
\usepackage{graphicx}


\usepackage[pdftex, bookmarksnumbered, bookmarksopen, colorlinks, citecolor=blue, linkcolor=blue]{hyperref}

\textwidth=14.5cm \textheight=21cm
\hoffset -0.32 true in
\voffset -0.1 true in

\allowdisplaybreaks

%
%
%

\newtheorem{thm}{Theorem}[section]
\newtheorem{defi}[thm]{Definition}

\newtheorem{lem}[thm]{Lemma}

\newtheorem{prop}[thm]{Proposition}

\newtheorem{rk}[thm]{Remark}

\theoremstyle{definition}
\theoremstyle{remark}
\numberwithin{equation}{section}

\newcommand{\R}{{\mathbb R}}

\newcommand{\Z}{{\mathbb Z}}
\newcommand{\C}{{\mathbb C}}

\newcommand{\M}{{\mathcal M}}

\newcommand{\n}{{\mathcal N}}
\newcommand{\bs}{\begin{split}}
\newcommand{\es}{\end{split}}

\theoremstyle{plain}

\newcommand{\be}{\begin{eqnarray*}}
\newcommand{\ee}{\end{eqnarray*}}
\newcommand{\beq}{\begin{align}}
\newcommand{\eeq}{\end{align}}

\def\1{\mathbf{1}}

\begin{document}

%
%
%
%
%
%
%
%
\setcounter{page}{1}
\title[Hermite expansions]
{Noncommutative analysis of Hermite expansions}



\author[B. Xu]{Bang Xu}
\address{Department of Mathematics\\
	University of Houston\\
	Houston TX 77204\\
	USA;
Department of Mathematical Sciences\\
Seoul National University\\
Seoul 08826\\
Republic of Korea}

\email{bangxu@whu.edu.cn}

\thanks{This work was supported by National Natural Science Foundation of China (No. 12071355), the Basic Science Research Program
	through the National Research Foundation of Korea (NRF) Grant: NRF-2022R1A2C1092320
	and the Samsung Science and Technology Foundation under Project Number: SSTF-BA2002-01.}

\subjclass[2010]{Primary 42B20, 42B25 ; Secondary 46L52, 46L53}
\keywords{Noncommutative $L_{p}$-spaces, Hermite semigroup, Maximal inequalities, Pointwise convergence, Multipliers}

\date{\today}
\begin{abstract}
This paper is devoted to the study of semi-commutative harmonic analysis associated to Hermite semigroups. In the first part,
we establish the noncommutative maximal inequalities for Bochner-Riesz means associated with Hermite operators and then obtain the corresponding pointwise convergence theorems. In particular, we develop a noncommutative Stein\textquoteright s theorem of Bochner-Riesz means for Hermite operators. The second part of this paper deals with two multiplier theorems for Hermite operators. Our analysis on this part is based on a noncommutative analogue of the classical Littlewood-Paley-Stein theory associated with Hermite semigroup.
\end{abstract}

\maketitle

\section{Introduction}
The theme of this paper follows the current research direction of noncommutative harmonic analysis. Motivated by operator space theory, Pisier and Xu \cite{PX1} developed a pioneering work on noncommutative martingale theory; since then, many classical theories have been successfully transferred to the noncommutative or quantum setting. For instance, it is inspired by noncommutative Burkholder-Gundy inequalities that Junge, Le Merdy and Xu \cite{JMX} initiated the research of noncommutative harmonic analysis; they established the noncommutative version of Littlewood-Paley-Stein theory by using the relationship between completely bounded $H^\infty$ functional calculus and quantum Markov semigroups. Later on, Junge and Mei \cite{JM2} introduced the Hardy and BMO spaces associated to quantum Markov semigroups. Again by quantum Markov semigroups, Junge et al \cite{JM1,JMP14,JMP18} studied H\"ormander-Mihlin
Fourier multipliers and Riesz transforms on group von Neumann algrbras (see \cite{JRD22,MRX22} for more results); Chen et al \cite{CXY13} developed systematically harmonic analysis on quantum tori (see also \cite{GJP17} regarding singular integral theory on quantum Euclidean spaces). We remark that via the transference technique  (see e.g. \cite{JMX,JM1,JM2}), harmonic analysis associated to quantum semigroups and semi-commutative harmonic analysis play an important role in
these fundamental works.


Semi-commutative harmonic analysis seems to be the easiest one in noncommutative theory, but it often requires new ideas and insights. The first notable work in this direction is due to  Mei \cite{M}; the  author gave a systematic study on the operator-valued Hardy spaces and BMO spaces, which incidentally solved an open question in matrix-valued harmonic analysis arising from prediction theory. More precisely, based on the noncommutative Doob's maximal inequality \cite{J1} or Cuculescu's maximal weak type $(1,1)$ result for martingales \cite{Cuc}, Mei established the operator-valued Hardy-Littlewood maximal inequalities (see \cite{JP1,HX} for more approaches). Furthermore, in the same paper, Mei proved that these operator-valued Hardy spaces, which are defined by the Littlewood-Paley $g$-function or Lusin function associated to Poisson kernel, are norm equivalent in the sense of noncommutative $L_{p}$-norms (see \cite{XXX16,HY2013} for a general characterization of operator-valued Hardy spaces).

We remark that most of works mentioned above, including Mei's Hardy/BMO spaces, Junge-Le Merdy-Xu's Littlewood-Paley-Stein theory and Hardy/BMO spaces over quantum tori etc, belong to harmonic analysis associated to quantum Markov semigroups. So it is natural to consider semigroups beyond Markov in noncommutative harmonic analysis. The typical example in classical case is the Hermite semigroup.

The purpose of this paper is to investigate Fourier analysis associated with the Hermit semigroup acting on operator-valued $L_p$ functions, which can be regarded as a case study of the general research program on noncommutative harmonic analysis, for instance the spectral multipliers and Littlewood-Paley-Stein theory associated to quantm semigroups beyond Markov.

We first set some notation. Let $H=-\Delta+|x|^{2}$ be the Hermite operator---the generator of Hermite semigroup---on $\R^d$. Recall that the Hermite functions $H_{m}(t)$ on $\R$ are defined by
$$H_{m}(t)=(-1)^{m}\exp(-t^{2})(\frac{d}{dt})^{m}\{\exp(-t^{2})\},\ m=0,1,2,....$$
The normalised Hermite functions, denote by  $\phi_{m}(t)$ (see (\ref{899})), form an orthonormal basis of
$L_{2}(\R)$.
For any multiindex $\nu$,  the $d$-dimensional Hermite functions $\{\Phi_{\nu}\}_{\nu}$ are given by the tensor product of one dimensional normalised Hermite functions (see (\ref{8919})), which form a complete orthonormal system in $L_{2}(\R^{d})$. Therefore, for any $f\in L_{2}(\R^{d})$, we have the  Hermite expansion 
$$f(x)= \sum_{\nu}\langle f,\Phi_{\nu}\rangle\Phi_{\nu}(x).$$

Research on Hermite expansions could be traced back to 1965s. Askey and Wainger \cite{AW} proved that the Hermite expansion converges if and only if $\frac{4}{3}<p<4$ for all $f\in L_{p}(\R^d)$. Therefore, it is necessary to find suitable summability methods since the expansion fails to converge for $p$ lying outside the interval $(\frac{4}{3},4)$. The most typical study object is the  Bochner-Riesz means. 
For $R>0$, the Bochner-Riesz means of order $\alpha$ is defined by  
$$S_{R}^{\alpha}f(x)=\sum_{k=0}^{\infty}\Big(1-\frac{2k+d}{R}\Big)_{+}^{\alpha}P_{k}f(x),$$
where $P_{k}$ denotes the Hermite projection operator (see (\ref{99232})).

The boundedness theory of Bochner-Riesz means associated to Hermite expansion was originally established by Thangavelu. In one dimensional case, it is known \cite{T1} that if $\alpha>1/6$, then for any $f\in L_{p}(\R)$ with $1\leq p<\infty$,
\begin{equation}\label{mean1}
\|S_{R}^{\alpha}f-f\|_{L_{p}(\R)}\rightarrow0\ \mbox{as}\ R\rightarrow\infty.
\end{equation}
In higher dimensions $(d\geq2)$, the $L_p$ convergence of $S_{R}^{\alpha}f$ is not so well understood yet. When $\alpha>(d-1)/2$, Thangavelu \cite{T2} showed that for any $f\in L_{p}(\R^d)$ with $1\leq p<\infty$,
\begin{equation}\label{mean2}
	\|S_{R}^{\alpha}f-f\|_{L_{p}(\R^d)}\rightarrow0\ \mbox{as}\ R\rightarrow\infty.
\end{equation}

Concerning almost everywhere convergence of $S_{R}^{\alpha}f$, it is known from \cite{T1,T2} that if $\alpha>1/6$, then for any $f\in L_{p}(\R)$ with $1\leq p<\infty$,
\begin{equation}\label{ae1}
	S_{R}^{\alpha}f\rightarrow f\ \mbox{a.e.}\ \mbox{as}\ R\rightarrow\infty.
\end{equation}
In higher dimensions, if $\alpha>(d-1)/2$, then for any $f\in L_{p}(\R^d)$ with $1\leq p<\infty$,
\begin{equation}\label{ae2}
	S_{R}^{\alpha}f\rightarrow f\ \mbox{a.e.}\ \mbox{as}\ R\rightarrow\infty.
\end{equation}

The work of Thangavelu has already found application in \cite{E1,EV}. For more results regarding the Bochner-Riesz means associated to Hermite operators, we refer the reader to \cite{CLSY20,CDHLY21} and  references therein.

In classical harmonic analysis, it is well-known that the convergence properties of Bochner-Riesz means associated to Fourier series are among the most important problems. Due to the noncommutativity, the study of noncommutative Bochner-Riesz means seems to be more challenging. For instance, Chen et al \cite{CXY13} had to found a much more technical proof when they dealt with the boundedness of the maximal Bochner-Riesz means on quantum tori. Later on, Lai \cite{L2022} obtained the full $L_p$-bounded of Bochner-Riesz means on two-dimensional quantum tori by establishing the sharper estimates of noncommutative Kakeya maximal functions and some geometric estimates in the plain.

The first part of this paper is to study the convergence properties of noncommutative Bochner-Riesz means associated to Hermite expansion. More precisely, we establish the noncommutative analogues of (\ref{mean1})-(\ref{ae2}). 
Let $\M$ be a von Neumann algebra equipped with a normal semifinite
faithful (abbrieviated as \emph{n.s.f.}) trace $\tau$ and $\mathcal N=L_{\infty}(\R^{d})\overline{\otimes}\M$ be a tensor von Neumann algebra equipped with a tensor trace $\varphi=\int\otimes \tau$. Let $L_p(\M)$ and $L_p(\mathcal N)$ be the noncommutative $L_p$-spaces associated to the pairs $(\M,\tau)$ and $(\mathcal N,\varphi)$, where $L_p(\mathcal N)$  can be identified as the space of $L_p(\mathcal M)$-valued $p$-th integrable functions on $\R^d$, that is $L_p(\mathcal N)=L_p(\R^{d};L_p(\mathcal M))$ whenever $0< p<\infty$. 

In Section 3, we consider the mean convergence of $S_{R}^{\alpha}f$ whenever $f\in L_p(\mathcal N)$ (see Theorem \ref{tc}), that is to establish the noncommutative analogues of (\ref{mean1}) and (\ref{mean2}), whose proof is based on the kernel estimates obtained by Thangavelu \cite{T1,T2}. In the latter part of Section 3, we present the maximal inequalities of the sequence of operators $(S_{R}^{\alpha})_{R>0}$ (see Theorem \ref{t6}, \ref{local} and \ref{t111}), which are crucial tools to obtain the noncommutative analogues of (\ref{ae1}) and (\ref{ae2}). However, compared with the $L_p$ convergence, the study of maximal inequalities is much more complicated. An immediate difficulty is that the
classical maximal function of the form $\sup_{i\in I}|f_i|$ no longer exists in general whenever $(f_i)_{i\in I}$ is a sequence of operators. Even though the definition of noncommutative weak type
$(1, 1)$ maximal inequalities exists in the early stage of noncommutative ergodic theory, the formulation of $L_p$ maximal inequality was not proposed until the $\ell_\infty$-valued noncommutative $L_p$-spaces  appeared (see Section 2)  introduced by  Pisier \cite{Pis01} and Junge \cite{J1} two decades ago
and has been widely used since then.

The second aspect of this paper is to consider certain multiplier transforms for Hermite expansion. Given a function $\mu$ on the set of positive integers, we define the operator $T_{\mu}$ by the prescription
$$T_{\mu}f(x)=\sum_{n=0}^{\infty}\mu(2n+d)P_{n}f(x).$$
To find a sufficient condition on the function $\mu$ such that $T_{\mu}$  is bounded on $L_p(\R^d)$ ($1<p<\infty$) is one of the mian task in Thangavelu's work \cite{T4}. Motivated by the classical Marcinkiewicz multiplier theorem for Fourier series, Thangavelu introduced the finite difference operators, which are defined inductively as follows:
\begin{equation*}
\delta\mu(N)=\mu(N+1)-\mu(N)
\end{equation*}
and for $n\geq1$, they are defined by
\begin{equation*}
\delta^{n+1}\mu(N)=\delta^{n}\mu(N+1)-\delta^{n}\mu(N).
\end{equation*}
Accordingly, Thangavelu \cite{T4} established the  Marcinkiewicz multiplier theorem for Hermite expansions. More precisely, if $\mu$ satisfies the condition $|\delta^{r}\mu(N)|\leq CN^{-r}\ \ \mbox{for}\ \ r=0,1,...,n$
with $n>d/2$, then for $1<p<\infty$,
\begin{equation}\label{mul3}
\|T_{\mu}f\|_{p}\leq C\|f\|_{p}\ \forall f\in L_{p}(\R^d).
\end{equation}

In Section 4, we extend (\ref{mul3}) to the operator-valued setting (see Theorem \ref{555}). The main ingredient is the noncommutative Littlewood-Paley-Stein theory for the $g$-functions associated with the Hermite semigroup $H^t$ (see (\ref{109})). Since the Hermite semigroup $H^{t}$ fails to satisfy the nice condition $H^{t}1=1$, the previous results on Markov semigroup can not be adapted here. Instead, in view of the explicit form of the associated kernel, we can make use of kernel estimates and operator-valued Calder{\'o}n-Zygmund  (abbrieviated as CZ) theory to overcome the difficulties. The semi-commutative  CZ singular integral theory was  established by Mei and Parcet et al. In particular,  Parcet \cite{JP1} 
made use of the tool of noncommutative martingales theory to formulate a  noncommutative version of CZ decomposition. Consequently, Parcet obtained all the $L_p$ estimates of standard CZ operators acting on operator-valued functions, which finds  an unexpected application in solving the Nazarov-Peller conjecture arising from the perturbation theory \cite{CPSZ}. Recently, Parcet's decomposition and CZ arguments are greatly improved in a later work \cite{CCP}, see also \cite{HLX}.

In \cite{T4}, Thangavelu also investigated a kind of oscillation operator related to Hermite expansion, where the function $\mu$ is given by 
$$\mu(n)=(2n+d)^{-\alpha}e^{(2n+d)it}.$$
This defines the operator $T_{t}^{\alpha}$ by
$$T_{t}^{\alpha}f(x)=\sum_{n=0}^{\infty}(2n+d)^{-\alpha}e^{(2n+d)it}P_{n}f(x).$$
It is known from \cite{T4} that if $\alpha=d|1/p-1/2|$, then for $1<p<\infty$,
\begin{equation}\label{mul4}
\|T_{t}^{\alpha}f\|_{L_p(\R^d)}\leq C\|f\|_{L_p(\R^d)}.
\end{equation}

In Section 5, we extend Thangavelu's work (\ref{mul4}), whenever $f$ is considered as an operator-valued function (see Theorem \ref{C3}). The main difficulty lie in the fact that the kernel associated to such operator has oscillating factor $e^{ix\cdot t}$, the study of this operator does not fall into the scope of the noncommutative CZ theory.
Fortunately, the associated kernel can be calculated explicitly (see Lemma \ref{2002}); and we may use an $H_1\rightarrow L_1$ endpoint estimate of $T_{t}^{\alpha}$ at critical point $\alpha=d/2$
by using the atom characterization of $H_1$ Hardy space introduced by Mei \cite{M}. Finally, together with Stein's analytic interpolation, we obtain the desired result.

\textbf{Notation:} Throughout the paper we write
$X\lesssim Y$ for nonnegative quantities $X$ and $Y$ to imply that there exists some inessential constant $C>0$ such that $X\le CY$ and we write $X\thickapprox Y$ to mean that $X\lesssim Y$ and $Y \lesssim X$.

\section{Preliminaries}
\subsection{Noncommutative $L_p$-spaces}
Throughout the paper, $\M$ always denote a semifinite von Neumann algebra equipped with a normal semifinite faithful trace $\tau$.
Let $\M_{+}$ be the positive part of $\M$ and $\mathcal{S}_{\M+}$ be the set of all $x\in\M_{+}$ such that $\tau (\mathrm{supp} (x) ) < \infty,$ where $\mathrm{supp} (x)$ means the support of $x$. Denote by $\mathcal{S}_{\M}$ the linear span of $\mathcal{S}_{\M+}$. Then $\mathcal{S}_{\M}$ is a $w^{*}$-dense $\ast$-subalgebra of $\M$. Given $1\leq p<\infty$ and $x\in\mathcal{S}_{\M}$, we set
$$\|x\|_{p}=\big(\tau(|x|^p)\big)^{1/p},$$
where $|x|=(x^{\ast}x)^{1/2}$ is the modulus of $x$. Then one can check that $\|\cdot\|_{p}$ is a norm on $\mathcal{S}_{\M}$. The completion of $(\mathcal{S}_{\M},\|\cdot\|_{p})$ is the so-called noncommutative $L_{p}$-space associated with $(\M,\tau)$, which is simply written as $L_{p}(\M)$. As usual, we set $L_{\infty}(\M) = \M$ equipped with the operator norm $\|\cdot\|_{\M}$. Denote by $L_{p}(\M)_{+}$ the positive part of $L_{p}(\M)$.

A closed and densely defined operator
$x$ affiliated with $\mathcal{M}$ is called \emph{$\tau$-measurable} if
there is $\lambda > 0$ such that $$\tau \big( \chi_{(\lambda,\infty)}
(|x|) \big) < \infty,$$
where $\chi_{\mathcal{I}}(x)$ is the spectral
decomposition of $x$ and $\mathcal{I}$ is a measurable subset of $\R$.
Let $L_0(\M)$ be the $*$-algebra of \emph{$\tau$-measurable}
operators. For $1\leq p<\infty$, the noncommutative weak $L_p$-space
$L_{p,\infty}(\mathcal{M})$ is defined as the set of all $x$ in $L_0(\M)$ for which the following  quasi-norm is finite
$$\|x\|_{p,\infty}=\sup_{\lambda > 0}\lambda\tau \big( \chi_{(\lambda,\infty)}
(|x|) \big)^{1/p}.$$
We refer the reader to \cite{FK,P2} for more information on noncommutative $L_p$-spaces.

\subsection{Noncommutative Hilbert valued $L_{p}$-spaces}
The noncommutative Hilbert valued $L_{p}$-spaces present a suitable framework for studying the square functions in the noncommutative setting.
Let $\mathcal{H}$ be a Hilbert space. Given $v \in \mathcal{H}$ with norm one, take $p_v=v\otimes \overline{v}$ the rank one projection onto span$\{v\}$. Then for $1\leq p\leq \infty$, we define
 $$L_p(\M; \mathcal{H}^{r})=(p_v\otimes1_{\M}) L_p(B(\mathcal{H})\overline\otimes\M)\;\textrm{ and }\;
 L_p(\M; \mathcal{H}^{c})= L_p(B(\mathcal{H})\overline\otimes\M)(p_v\otimes1_{\M}),$$
where $\1_{\mathcal{M}}$ stands for the unit
elements in $\mathcal{M}$ and
$B(\mathcal{H})$ is equipped with the usual trace. The definitions of these two spaces are essentially independent of the choice of $v$ (see \cite{JMX}). 
Therefore, we conclude that
$$\|u\|_{L_p(\M; \mathcal{H}^r)} = \big\| (uu^*)^{1/2}
\big\|_{L_p(\M)} \quad \mbox{and} \quad \|u\|_{L_p(\M;
\mathcal{H}^c)} = \big\| (u^*u)^{1/2} \big\|_{L_p(\M)}.$$

According to \cite[Chapter 2]{JMX}, we may apply these identities to regard $L_p(\M) \otimes \mathcal{H}$ as a dense subspace of $L_p(\M;\mathcal{H}^r)$ and $L_p(\M;\mathcal{H}^c)$. To be more specific, for $f = \sum_k u_k \otimes v_k \in
L_p(\M) \otimes \mathcal{H}$, we have
\begin{eqnarray*}
\|f\|_{L_p(\M;\mathcal{H}^r)} & = & \Big\| \Big( \sum_{i,j}
\langle v_i,v_j \rangle \, u_iu_j^* \Big)^{1/2}
\Big\|_{L_p(\M)}, \\ \|f\|_{L_p(\M;\mathcal{H}^c)} & = & \Big\|
\Big( \sum_{i,j} \langle v_i,v_j \rangle \, u_i^*u_j \Big)^{1/2}
\Big\|_{L_p(\M)}.
\end{eqnarray*}
This procedure can also be used to define
$$L_{1,\infty}(\M;\mathcal{H}^r)\ \ \mbox{and}\ \ L_{1,\infty}(\M;\mathcal{H}^c).$$

Finally, we define the mixture spaces $L_p(\mathcal{M};
\mathcal{H}^{rc})$ as follows:
$$L_p(\mathcal{M}; \mathcal{H}^{rc}) = \left\{ \begin{array}{ll}
L_p(\mathcal{M}; \mathcal{H}^{r}) + L_p(\mathcal{M}; \mathcal{H}^{c})
& 1 \le p \le 2, \\ L_p(\mathcal{M}; \mathcal{H}^{r}) \cap \hskip1pt
L_p(\mathcal{M}; \mathcal{H}^{c}) & 2 \le p \le \infty.
\end{array} \right.$$
It is obvious to see that
$L_2(\mathcal{M}; \mathcal{H}^{r}) = L_2(\mathcal{M};
\mathcal{H}^{c})=L_2(\mathcal{M};
\mathcal{H}^{rc})$.
The reader is referred to \cite{JMX} for a more general description of the Hilbert valued operator spaces.

\subsection{Noncommutative $\ell_{\infty}$-valued $L_{p}$-spaces and maximal inequalities}
Let $I$ be an index set.
Given $1\le p\leq\infty$, $L_p(\mathcal {M};\ell_\infty(I))$ consists of all families $(x_n)_{n\in I}$ in $L_p(\mathcal {M})$ which can be factorized as $x_n=ay_nb$ with $a,b\in L_{2p}( \mathcal {M})$  and $(y_n)_{n\in I}\subset L_\infty(\mathcal {M})$. The norm of $(x_n)_{n\in I}$ in $L_p(\mathcal {M};\ell_\infty(I))$ is defined as
$$\|(x_n)_{n\in I}\|_{L_p(\mathcal {M};\ell_\infty(I))}=\inf\left\{\big\|a\big\|_{{2p}}\sup_{n\in I}\big\|y_n\big\|_\infty\big\|b\big\|_{{2p}}\right\},$$
where the infimum is taken over all factorizations as above.
Usually, the norm of $(x_n)_{n\in I}$ in $L_p(\mathcal {M};\ell_\infty(I))$ is denoted by $\|{\sup_{n\in I}}^+x_n\|_p$, that is $\|{\sup_{n\in I}}^+x_n\|_p:=\|(x_n)_n\|_{L_p(\mathcal {M};\ell_\infty(I))}$.

The following property is stated in \cite[Remark 4.1]{CXY13}.
\begin{rk}\label{rk:MaxFunct}\rm
Let $(x_n)_{n\in I}$ be a sequence of selfadjoint operators in $L_p(\mathcal {M})$. Then $x=(x_n)_{n\in I}$ in $L_p(\mathcal {M};\ell_\infty(I))$ if and only if there is $a\in L_{p}(\M)_{+}$
such that $-a\leq x_n\leq a$ for all $n\in I$. Moreover, in this case,
$$\|x\|_{L_p(\mathcal {M}; \ell_\infty(I))}=\inf\Big\{\|a\|_p: a\in L_{p}(\M)_{+}\ \mbox{such that}\ -a\leq x_n\leq a
,\ \forall n\in I\Big\}.$$
\end{rk}

\noindent In the rest of this paper, we will omit the index set $I$ when it will not cause confusions.

It has already been shown in \cite{J1} that $L_{p}(\mathcal {M};\ell_\infty)$ is a dual space for every $p>1$ and its predual space is denoted by $L_{p'}(\mathcal {M};\ell_1)$ ($p'$ being the conjugate index of $p$). Let us briefly recall the latter space. Given $1\leq p\leq\infty$,
$L_{p}(\mathcal {M};\ell_1)$ is defined to be the space of all sequences $x=(x_n)_{n}$ in $L_{p}(\mathcal {M})$ which can be factorized as
$$x_n=\sum_{k}u_{kn}^{\ast}v_{kn}, \quad \forall n$$
for two families $(u_{kn})_{k,n}$ and $(v_{kn})_{k,n}$ in $L_{2p}(\mathcal {M})$ such that
$$\sum_{k,n}u^{\ast}_{kn}u_{kn}\in L_{p}(\mathcal {M})\;\; \text{and}\;\; \sum_{k,n}v^{\ast}_{kn}v_{kn}\in L_{p}(\mathcal {M}),$$
where all series are required to be convergent in $L_{p}(\mathcal {M})$ (relative to the w$^{*}$-topology for $p=\infty$).
The norm of $x$ in $L_{p}(\mathcal {M};\ell_1)$ is defined by
$$\|x\|_{L_{p}(\mathcal {M};\ell_1)}=\inf \|\sum_{k,n}u^{\ast}_{kn}u_{kn}\|_{p}^{1/2}\|\sum_{k,n}v^{\ast}_{kn}v_{kn}\|_{p}^{1/2},$$
where the infimum runs over all possible decompositions $x_n=\sum_{k}u_{kn}^{\ast}v_{kn}$. 
The duality between $L_{p}(\mathcal {M};\ell_{\infty})$ and $L_{p'}(\mathcal {M};\ell_1)$ is given by
$$\langle x,\;y\rangle=\sum_n\tau(x_ny_n),\quad x=(x_n)_{n}\in L_{p}(\mathcal {M};\ell_\infty),\;y=(y_n)_{n}\in L_{p'}(\mathcal {M};\ell_1).$$

The following properties  can be found in \cite[Proposition 2.1]{JX}.
\begin{prop}\label{JX2007}\rm
Let $1\leq p\leq\infty$.
\begin{itemize}
	\item [(i)] A sequence $x=(x_n)_{n\in I}$ in $L_{p}(\M)$ belongs to $L_p(\mathcal {M}; \ell_\infty(I))$ if and only if
	$$\sup\big\{\|{\sup_{n\in J}}^+x_n\|_p:\ J\subset I,\ J\ \mbox{is\ finite}\big\}<\infty.$$
	In this case, $\|{\sup_{n\in I}}^+x_n\|_p$ is equal to the above supremum. 

	\item [(ii)] Let $x=(x_n)_n$ be a positive sequence in  $L_p (\M; \ell_{\infty})$. Then
	$$ \big \|{\sup_n}^+ x_n \big \|_p = \sup \Big \{ \Big | \sum_n \tau(x_n y_n ) \ :\
	y_n \in L_{p'} (\M)_{+}\ \mbox{and}\  \Big \| \sum_ny_n \Big \|_{p'} \le 1 \Big \}.$$
\end{itemize}
\end{prop}

Based on these necessary notions, we can present the definition of the noncommutative maximal inequalities.
\begin{defi}\rm
Let $1\leq p\leq\infty$ and
let $S=(S_{n})_{n}$ be a family of maps from $L_{p}(\M)_{+}$ to $L_{0}(\M)_{+}$.
\begin{itemize}
\item [(i)] For $p<\infty$, we say that $S$ is of weak type $(p,p)$ with constant $C$ if there is a positive constant $C$ such that for any $x\in L_{p}(\M)_{+}$ and any $\lambda>0$, there is a projection $e\in\M$ satisfying
$$\forall n\ \ eS_{n}(x)e\leq\lambda\ \ \mbox{and}\ \ \tau(e^{\perp})\leq\frac{C^{p}\|x\|_{p}}{\lambda^{p}}.$$
\item [(ii)] For $1\leq p\leq\infty$, we say that $S$ is of strong type $(p,p)$ with constant $C$ if there is a positive constant $C$ such that for any $x\in L_{p}(\M)_{+}$ there exists $a\in L_{p}(\M)_{+}$ satisfying
$$\forall n\ \ S_{n}(x)\leq a\ \ \mbox{and}\ \ \|a\|_{p}\leq C\|x\|_{p}.$$
\end{itemize}
\end{defi}

We refer the reader to \cite{J1} and  \cite{JX} for more details.
\subsection{Almost uniform convergence}
In this subsection, we recall the noncommutative analogue of the usual almost everywhere convergence. The following definition is introduced by Lance \cite{Lan76}.
\begin{defi}\label{b.a.u.}\rm
Let $\mathcal {M}$ be a semifinite von Neumann algebra equipped with a semifinite normal faithful trace $\tau$. Let $x_n,x\in L_0(\M)$.
\begin{itemize}
\item[(i)]  $(x_n)$ is said to converge bilaterally almost uniformly (b.a.u. in short) to $x$ if for any $\varepsilon>0$, there is a
projection $e\in \M$ such that
$$\tau(e^\bot)<\varepsilon \ \ \  \mbox{and} \ \ \ \lim _{n\rightarrow\infty}\|e(x_n-x)e\|_\infty=0.$$

\item[(ii)]  $(x_n)$ is said to converge almost uniformly (a.u. in short) to $x$ if for any $\varepsilon>0$, there is a projection $e\in \M$
such that
$$\tau(e^\bot)<\varepsilon \ \ \  \mbox{and} \ \ \ \lim _{n\rightarrow\infty}\|(x_n-x)e\|_\infty=0.$$
\end{itemize}
\end{defi}
\subsection{Operator-valued Hardy spaces and $\mathrm{BMO}$ spaces}
In this subsection,
let us start by introducing Mei's notion \cite{M} of row and column Hardy spaces. According to our requirement, here we only concentrate on operator-valued $\mathrm{H}_1$ space; for a general description of operator-valued Hardy spaces we refer the reader to \cite{M}. Define
$$\mathrm{H}_1(\R^d; \M)  =  \mathrm{H}_1^r(\R^d; \M) + \mathrm{H}_1^c(\R^d;\M)$$ equipped with the following sum norm $$\|f\|_{\mathrm{H}_1(\R^d;\M)}  = \inf_{f = g+h} \|g\|_{\mathrm{H}_1^r(\R^d;\M)} + \|h\|_{\mathrm{H}_1^c(\R^d;\M)} < \infty,$$ where the row/column norms are given  by
\begin{eqnarray*}
\|g\|_{\mathrm{H}_1^r(\R^d;\M)}  = \Big\| \Big( \int_\Gamma \Big[ \frac{\partial P_g}{\partial t} \hskip0.5pt \frac{\partial P_g^*}{\partial t} \hskip0.5pt + \hskip0.5pt \sum_j \frac{\partial P_g}{\partial x_j} \hskip0.5pt \frac{\partial P_g^*}{\partial x_j} \Big] (x + \cdot,t) \, \frac{dx dt}{t^{n-1}} \Big)^{1/2} \Big\|_1, \\ [-1pt] \|h\|_{\mathrm{H}_1^c(\R^d;\M)}  =  \Big\| \Big( \int_\Gamma \Big[ \frac{\partial P_h^*}{\partial t} \frac{\partial P_h}{\partial t} + \sum_j \frac{\partial P_h^*}{\partial x_j} \frac{\partial P_h}{\partial x_j} \Big] (x + \cdot,t) \, \frac{dx dt}{t^{n-1}} \Big)^{1/2} \Big\|_1,
\end{eqnarray*}
with $\Gamma = \{ (x,t) \in \R^{d+1}_+ \, | \ |x| < y \}$ and $P_f(x,t) = P_t f(x)$ stands for the Poisson semigroup $(P_t)_{t \ge 0}$. 
We say that $a \in L_1(\M; L_2^c(\R^d))$ is a \emph{column atom} if there is a cube $Q$ such that
\begin{itemize}
\item [(i)] $\mathrm{supp}_{\R^d} \hskip1pt a = Q$;

\item [(ii)] $\displaystyle \int_Q a(y) \, dy = 0$;

\item [(iii)] $\|a\|_{L_1(\M;L_2^c(\R^d))} = \displaystyle \tau \Big[ \big( \int_Q |a(y)|^2 \, dy \big)^{\frac{1}{2}} \Big] \le |Q|^{-\frac{1}{2}}$, where $|Q|$ is the volume of $Q$.
\end{itemize}

It is known from \cite[Theorem 2.8]{M} that $$\|f\|_{\mathrm{H}_1^c(\R^d;\M)} \, \thickapprox \, \inf \Big\{ \sum_n |\lambda_n| \, \big| \ f = \sum_n \lambda_n a_n \ \mbox{with} \ a_n \ \mbox{column atoms} \Big\}.$$

The operator-valued $\mathrm{BMO}$ spaces are also studied in \cite{M}. Let $Q$ be a cube in $\R^d$ with sides parallel to the axes. 
For a function $f:\R^{d}\rightarrow\M$ is integrable on $Q$, $f_Q$ denotes its average over $Q$, that is
$$f_Q = \frac{1}{|Q|} \int_Q f(x)dx.$$
Recall that $\mathcal N=L_{\infty}(\R^{d})\overline{\otimes}\M$ given in the introduction. The $\mathrm{BMO}$ space $\mathrm{BMO}(\mathcal N)$ is defined as a subspace of $L_{\infty}(\M;L_2^{rc}(\R^{d};dx/(1+|x|)^{d+1}))$ with
$$\|f\|_{\mathrm{BMO}(\mathcal N)} \, = \, \max \Big\{
\|f\|_{\mathrm{BMO}^r(\mathcal N)}, \|f\|_{\mathrm{BMO}^c(\mathcal N)}
\Big\} \, < \, \infty,$$ where the row/column $\mathrm{BMO}$
norms are given by
\begin{eqnarray*}
	\|f\|_{\mathrm{BMO}^r(\mathcal N)} & = & \sup_{Q\subset \R^{d}} \Big\| \Big(
	\frac{1}{|Q|} \int_Q \big( f(x) - f_Q \big) \big( f(x) - f_Q
	\big)^{\ast} \, dx \Big)^{1/2} \Big\|_{\M}, \\
	\|f\|_{\mathrm{BMO}^c(\mathcal N)} & = & \sup_{Q\subset\R^{d}} \Big\| \Big(
	\frac{1}{|Q|} \int_Q \big(f(x) - f_Q \big)^{\ast} \big(f(x) - f_Q \big)
	\, dx \Big)^{1/2} \Big\|_{\M}.
\end{eqnarray*}

In the sequel, we will frequently employ the following Cauchy-Schwarz type inequality (see \cite[(1.13)]{M}). 
Let $(\Omega,\mu)$ be a measure space. Then
\begin{equation}\label{CS}
 \Big|\int_{\Omega}\phi fd\mu\Big|^2\le \int_{\Omega}|\phi|^2d\mu \int_{\Omega} |f|^2d\mu,
\end{equation}
where $\phi:\Omega\to\C$ and $f:\Omega\to L_1(\M)+L_\infty(\M)$ are operator-valued functions such that all members of the above inequality make sense.
\section{Bochner-Riesz means for Hermite operator}
In this section, we are concerned with convergence properties of operator-valued Bochner-Riesz means for Hermite operator $H$, which is defined by 
$$H=-\Delta+|x|^{2}=-\sum_{n=1}^{d}\frac{\partial^{2}}{\partial x_{n}^{2}}+|x|^{2},\ \ x=(x_{1},...,x_{d}).$$
For nonnegative integer $n$, the Hermite functions $H_{n}(t)$ on $\R$ are defined by  $H_{n}(t)=(-1)^{n}\exp(-t^{2})(\frac{d}{dt})^{n}\{\exp(-t^{2})\}$, and 
the normalised Hermite functions $\phi_{n}(t)$ are then given by
\begin{equation}\label{899}
\phi_{n}(t)=(2^{n}\sqrt{\pi}n!)^{-1/2}\exp(-t^{2}/2)H_{n}(t),\ n=0,1,2,...,
\end{equation}
which form a complete orthonormal  system in $L_{2}(\R)$. For every multi-index $\nu=(\nu_{1},\nu_{2},...,\nu_{d})$ and $x\in\R^{d}$, the $d$-dimensional Hermite functions $\Phi_{\nu}(x)$ are defined by taking the tensor product of one dimensional normalised Hermite function 
\begin{equation}\label{8919}
	\Phi_{\nu}(x)=\phi_{\nu_{1}}(x_{1})\phi_{\nu_{2}}(x_{2})...\phi_{\nu_{d}}(x_{d}),\ \ x=(x_{1},...,x_{d}).
\end{equation}
Then the functions $\{\Phi_{\nu}\}_{\nu}$ are eigenfunctions for the Hermite operator $H$ with eigenvalue $(2|\nu|+d)$, where $|\nu|=\nu_{1}+\nu_{2}+...+\nu_{d}$; and form a complete orthonormal  system in $L_{2}(\R^{d})$. Hence, for every $f\in L_{2}(\mathcal{N})$, we have the Hermite expansion
\begin{equation}\label{991}
	f(x)=\sum_{\nu}\widehat{f}(\nu)\Phi_{\nu}(x)= \sum_{n=0}^{\infty}P_{n}f(x),
\end{equation}
where $\widehat{f}(\nu)$ is  defined by
$\widehat{f}(\nu) = \int_{\R^d}f(x)\Phi_{\nu}(x) \, dx$
and $P_{n}$ denotes the Hermite projection given by
\begin{equation}\label{99232}
	P_{n}f(x)=\sum_{|\nu|=n}\widehat{f}(\nu)\Phi_{\nu}(x).
\end{equation}

For $R>0$, the Bochner-Riesz means for $H$ of order $\alpha\geq0$ is defined by
\begin{equation}\label{993}
	S_{R}^{\alpha}f(x)=\sum_{n=0}^{\infty}\Big(1-\frac{2n+d}{R}\Big)_{+}^{\alpha}P_{n}f(x),
\end{equation}
where $f$ is a $L_1(\M)\cap L_\infty(\M)$-valued compactly supported measurable function.
\subsection{Mean convergence} In this subsection, we  study the $L_p$ convergence of Bochner-Riesz means. First of all, we list some basic estimates of Bochner-Riesz kernel obtained by Thangavelu \cite{T1,T3}. Denote by $S_{R}^{\alpha}(x,y)$ the Bochner-Riesz kernel associated to the operator $S_{R}^{\alpha}$. Then it is not difficult to verify that
\begin{equation} \label{4}
	S_{R}^{\alpha}(x,y)=\sum_{n=0}^{\infty}\Big(1-\frac{2n+d}{R}\Big)_{+}^{\alpha}\Phi_{n}(x,y),
\end{equation}
where the function $\Phi_{n}(x,y)$ is defined as
$\Phi_{n}(x,y)=\sum_{|\nu|=n}\Phi_{\nu}(x)\Phi_{\nu}(y).$

\begin{lem}[\cite{T1}]\label{14}\rm
For $d=1$ and $\alpha>1/6$, the following estimate is valid:
$$
|S_{R}^{\alpha}(x,y)|\leq CR^{1/2}\big\{(1+R^{1/2}|x-y|)^{-\alpha-5/6}
+(1+R^{1/2}|x+y|)^{-\alpha-5/6}\big\},
$$
where the constant $C$ is independent of $x,y$ and $R$.
\end{lem}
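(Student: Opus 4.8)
My plan rests on three ingredients: the classical pointwise/asymptotic estimates for the one--dimensional Hermite functions, the Christoffel--Darboux formula, and a van der Corput (repeated summation--by--parts) estimate for oscillatory sums. Since $d=1$, formula (\ref{4}) reads $S_R^\alpha(x,y)=\sum_{2n+1<R}(1-\tfrac{2n+1}{R})^\alpha\varphi_n(x)\varphi_n(y)$. About $\varphi_n$ I would invoke the Plancherel--Rotach estimates, which describe $\varphi_n$ relative to the turning point $\sqrt{2n+1}$: on the oscillatory bulk $|x|\le\sqrt{2n+1}-c(2n+1)^{-1/6}$ one has $\varphi_n(x)\approx\sqrt{\tfrac2\pi}\,(2n+1-x^2)^{-1/4}\cos(\Theta_n(x)+\delta_n)$ with the WKB phase $\Theta_n(x)=\int_0^x\sqrt{2n+1-t^2}\,dt$ (odd in $x$), whose $n$--derivative equals $\arcsin(x/\sqrt{2n+1})$; on the Airy transition layer $\big||x|-\sqrt{2n+1}\big|\lesssim(2n+1)^{-1/6}$ one has $|\varphi_n(x)|\lesssim(2n+1)^{-1/3}$; and on the tail $|x|\ge\sqrt{2n+1}+c(2n+1)^{-1/6}$, $\varphi_n$ decays at a Gaussian rate. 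Writing $\varphi_n(x)\varphi_n(y)$ via $\cos a\cos b=\tfrac12\cos(a-b)+\tfrac12\cos(a+b)$, the ``difference'' phase $\Theta_n(x)-\Theta_n(y)$ has $n$--derivative $\arcsin(x/\sqrt{2n+1})-\arcsin(y/\sqrt{2n+1})\sim(x-y)/\sqrt{2n+1}$, monotone in $n$: this governs the singularity at $x\approx y$. The ``sum'' phase enters with an additional factor $(-1)^n$ coming from the parity structure of the Hermite functions, so in the bulk it produces no singularity; but through the relation $\varphi_n(-y)=(-1)^n\varphi_n(y)$ and the coincidence (up to sign) of the Airy bumps at $\pm\sqrt{2n+1}$ it yields the second singular term $x\approx-y$ near the turning points. (By the symmetries $S_R^\alpha(-x,-y)=S_R^\alpha(x,y)=S_R^\alpha(y,x)$ one may take $x\ge0$.)

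I would first dispose of the local range $R^{1/2}|x-y|\lesssim1$ and $R^{1/2}|x+y|\lesssim1$: there $|S_R^\alpha(x,y)|\le\sum_{2n+1<R}|\varphi_n(x)||\varphi_n(y)|\lesssim\sum_{2n+1<R}(2n+1)^{-1/2}\lesssim R^{1/2}$ by the pointwise estimates (transition layers and tails contribute less), which is the asserted bound. In the complementary range I apply Abel summation together with Christoffel--Darboux, $\sum_{n\le m}\varphi_n(x)\varphi_n(y)=\big(\tfrac{m+1}2\big)^{1/2}\dfrac{\varphi_{m+1}(x)\varphi_m(y)-\varphi_m(x)\varphi_{m+1}(y)}{x-y}$, which rewrites $S_R^\alpha(x,y)$ as $\tfrac1{x-y}$ times a sum over $m<R/2$ of the first differences $(1-\tfrac{2m+1}R)^\alpha-(1-\tfrac{2m+3}R)^\alpha$ (comparable to $\tfrac\alpha R(1-\tfrac{2m+1}R)^{\alpha-1}$ away from the endpoint) against $\big(\tfrac{m+1}2\big)^{1/2}\big(\varphi_{m+1}(x)\varphi_m(y)-\varphi_m(x)\varphi_{m+1}(y)\big)$; the $x\approx-y$ part reduces to the same shape via the parity relation, which inserts only an alternating factor $(-1)^m$. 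On the part of the $m$--sum where $x$ and $y$ both lie in the oscillatory bulk I substitute the WKB form: the amplitude $\big(\tfrac{m+1}2\big)^{1/2}(2m+1)^{-1/2}$ is $\sim1$, and a van der Corput estimate against the slowly varying, monotone phase --- pushed far enough to absorb the $\tfrac1{x-y}$ and to exploit the order--$\alpha$ vanishing of the weight at $2m+1=R$ (a fractional integration near that endpoint) --- gives from this zone decay $(1+R^{1/2}|x\mp y|)^{-\alpha-5/6}$ (indeed better), while the tails contribute a Gaussian-type error.

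The crux, and the source both of the exponent $5/6$ (rather than $1$) and of the hypothesis $\alpha>1/6$, is the transition (Airy) regime: the band of indices $m$ for which $\sqrt{2m+1}$ lies within $O((2m+1)^{-1/6})$ of $|x|$ or $|y|$, i.e. where the stationary point of the oscillatory sum collides with a turning point of one Hermite factor. There no oscillatory cancellation is available, one factor is only of Airy size $(2m+1)^{-1/3}$, and the band contains $\sim(2m+1)^{1/3}$ indices; this sub--sum must be estimated directly, using the size of $(1-\tfrac{2m+1}R)^\alpha$ there together with the position of the turning point relative to $R$, and optimising ``band length $\times$ Airy amplitude $\times$ weight'' against the target $R^{1/2}(1+R^{1/2}|x\mp y|)^{-\alpha-5/6}$ is precisely what forces $\alpha>1/6$ (for $\alpha\le1/6$ the band is genuinely too large, and the estimate fails). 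The main technical burden I anticipate is the bookkeeping: $|x|$ and $|y|$ may sit in different zones for different $m$, so the two turning points are crossed at different indices and the Airy layers of $x$ and $y$ may or may not overlap; one must separately handle $x\approx\pm y$ lying on a turning point, and then reassemble the pieces. This recovers the estimate of Thangavelu; alternatively one may simply invoke \cite{T1}.
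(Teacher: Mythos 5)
The paper does not prove this lemma at all: it is quoted from Thangavelu \cite{T1} with no argument supplied, so for the purposes of this paper the last sentence of your proposal (``simply invoke \cite{T1}'') is the entire intended justification. That said, your sketch of an actual proof does follow the genuine route of Askey--Wainger and Thangavelu: Plancherel--Rotach asymptotics in the three regimes, Christoffel--Darboux combined with Abel summation against the first differences of the Riesz weights, the parity relation $\varphi_n(-y)=(-1)^n\varphi_n(y)$ converting the $|x+y|$ singularity into the kernel at $(x,-y)$, and a separate, non-oscillatory treatment of the turning-point band. The local bound $|S_R^\alpha(x,y)|\lesssim R^{1/2}$ is also correct, provided one uses $\sum_{2n+1<R}\varphi_n(x)^2\lesssim R^{1/2}$ and Cauchy--Schwarz rather than a uniform bound $|\varphi_n(x)|\lesssim(2n+1)^{-1/4}$, which fails near the turning points.

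There is, however, a concrete quantitative error at exactly the step you identify as the crux. In the Airy layer $\big||x|-\sqrt{2n+1}\big|\lesssim(2n+1)^{-1/6}$ the correct bound is $|\varphi_n(x)|\lesssim(2n+1)^{-1/12}$ --- equivalently, the uniform estimate $|\varphi_n(x)|\lesssim\big(N^{1/3}+|x^2-N|\big)^{-1/4}$ with $N=2n+1$ --- and not $(2n+1)^{-1/3}$. Indeed $(2n+1)^{-1/3}$ is smaller than the bulk amplitude $(2n+1)^{-1/4}$, which is impossible since $\varphi_n$ attains its maximum (of size $n^{-1/12}$) near the turning points. This is not cosmetic: the exponent $5/6=1-2\cdot\tfrac{1}{12}$ and the threshold $\alpha>1/6=2\cdot\tfrac{1}{12}$ record precisely the loss of $N^{1/12}$ per Hermite factor at a turning point, so the optimisation of ``band length $\times$ Airy amplitude $\times$ weight'' that you defer cannot reproduce the stated numbers with the amplitude $N^{-1/3}$. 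Since that optimisation, together with the case bookkeeping for $x$ and $y$ lying in different zones, is also the only part of the argument you do not actually carry out, the sketch as written does not yet yield the lemma; it needs the corrected amplitude and the full turning-point analysis of \cite{T1}.
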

\begin{lem}[\cite{T3}]\label{15}\rm
If $1\leq p\leq2$, $d\geq2$ and $\alpha>\frac{d-1}{2}$, then for any $r>0$, the following estimate is valid:
$$\Big(\int_{|x-y|\geq r}|S_{R}^{\alpha}(x,y)|^{p}dy\Big)^{1/p}\leq CR^{d/2q}(1+R^{1/2}r)^{-\alpha-1/2+d(1/p-1/2)}$$
where $1/p+1/q=1$ and the constant $C$ is independent of $R,r$ and $x$.
\end{lem}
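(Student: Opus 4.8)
The plan is to view $S_R^\alpha$ as the spectral multiplier $S_R^\alpha=m_\alpha(H/R)$ of the Hermite operator, with $m_\alpha(s)=(1-s)_+^\alpha$, and to split the asserted family of inequalities (indexed by $p\in[1,2]$) into its two endpoints $p=1$ and $p=2$. For a \emph{fixed} function $F$ on $\R^d$ one has $\|F\|_{p}\le\|F\|_{1}^{2/p-1}\|F\|_{2}^{2-2/p}$ whenever $1\le p\le2$; applying this with $F=S_R^\alpha(x,\cdot)\,\mathbf{1}_{\{|x-\cdot|\ge r\}}$ one checks, by a short bookkeeping of the exponents, that the two endpoint estimates
\begin{align*}
\int_{|x-y|\ge r}|S_R^\alpha(x,y)|\,dy&\lesssim(1+R^{1/2}r)^{-\alpha-\frac12+\frac d2},\\
\Big(\int_{|x-y|\ge r}|S_R^\alpha(x,y)|^{2}\,dy\Big)^{1/2}&\lesssim R^{d/4}\,(1+R^{1/2}r)^{-\alpha-\frac12}
\end{align*}
combine to give exactly $R^{d/2q}(1+R^{1/2}r)^{-\alpha-1/2+d(1/p-1/2)}$. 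Thus it suffices to establish these two, and the hypothesis $\alpha>(d-1)/2$ will be used only for the first one.

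For the $L^2$ endpoint with $r=0$, the orthonormality of $\{\Phi_\nu\}_\nu$ (cf. (\ref{992})) together with the reproducing property $\int_{\R^d}\Phi_n(x,y)^2\,dy=\Phi_n(x,x)$ gives $\|S_R^\alpha(x,\cdot)\|_2^2=\sum_n(1-\tfrac{2n+d}{R})_+^{2\alpha}\,\Phi_n(x,x)\le\sum_{2n+d\le R}\Phi_n(x,x)$, and the last quantity is $\lesssim R^{d/2}$ uniformly in $x$ — this on-diagonal bound for the Hermite spectral projections follows from Mehler's formula, which yields $e^{-tH}(x,x)\lesssim t^{-d/2}$, upon comparing with $e^{-H/R}(x,x)$. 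For $r>0$ one needs the off-diagonal decay $\int_{|x-y|\ge r}|S_R^\alpha(x,y)|^{2}\,dy\lesssim R^{d/2}(1+R^{1/2}r)^{-2\alpha-1}$, which I would get by decomposing $\{|x-y|\ge r\}$ into dyadic annuli and summing the kernel bound on each, or, alternatively, by a spectral-theoretic argument: representing $m_\alpha(H/R)$ via the Schr\"odinger propagator $e^{itH/R}$ (whose kernel is explicit through the Mehler formula) and integrating by parts in $t$, trading the smoothness of $m_\alpha$ — that is, powers of $\alpha$ — against powers of $(R^{1/2}|x-y|)^{-1}$.

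The $L^1$ endpoint is the delicate one: here $\alpha>(d-1)/2$ is essential, and no pointwise kernel bound of the required quality is available (the decay $-\alpha-5/6$ in the one-dimensional Lemma \ref{14} is already too weak for the present purpose). The route I would follow is to reduce the $d$-dimensional Hermite kernel to one-dimensional Laguerre kernels: expanding the shell kernel $\Phi_n(x,y)=\sum_{|\nu|=n}\Phi_\nu(x)\Phi_\nu(y)$ in spherical harmonics, its component of spherical degree $k$ factors as a Laguerre function of type $k+\tfrac d2-1$ in $|x|^2$, times the same function in $|y|^2$, times the zonal harmonic $Z_k(\hat x,\hat y)$ (the reproducing kernel of the degree-$k$ spherical harmonics); summing in $n$, the kernel $S_R^\alpha(x,y)$ decomposes into Laguerre Bochner--Riesz kernels of type $\tfrac d2-1$ and higher. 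The estimate then follows from the known kernel bounds for Laguerre Bochner--Riesz means (as in \cite{T2}, themselves obtained from the Plancherel--Rotach/Hilb-type asymptotics of Laguerre functions, stationary phase, and the composition formula (\ref{994}) to pass from a convenient fixed order up to a general $\alpha$), after converting the radial integral back to $\R^d$ by polar coordinates and summing the harmonic pieces. At this stage the exponents force the convergence condition $p(\alpha+\tfrac{d+1}{2})>d$ for the $|x-y|$-integral, which over $1\le p\le2$ is sharpest exactly at $p=1$, where it reads $\alpha>(d-1)/2$; this is the sole use of the hypothesis. The constant comes out independent of $x$ because the potentially $x$-dependent ``reflection'' contributions — the analogue of the $|x+y|$-term in Lemma \ref{14} — are concentrated near $y=-x$ and, after integration in $y$, are dominated by the main term.

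The main obstacle is the fine asymptotic analysis underlying both endpoints: the behaviour of the Hermite and Laguerre functions in the transition (Airy) regime near the turning sphere $|x|\sim R^{1/2}$, where they pass from oscillatory to exponentially decaying, together with the need to keep every constant uniform not merely in $R$, $r$ and $x$ but also in the spherical-harmonic degree $k$ as the Laguerre pieces are summed. This is precisely where the Hermite problem departs from its Euclidean model, and it is the reason the statement is an $L^p$-estimate for $p\le2$ rather than a pointwise one.
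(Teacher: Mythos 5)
First, note that the paper does not prove this lemma at all: it is imported verbatim from Thangavelu \cite{T3} (with the underlying kernel analysis in \cite{T2}), so there is no in-paper argument to compare yours against. Judged on its own terms, your reduction is sound where it is explicit: the log-convexity interpolation $\|F\|_{p}\le\|F\|_{1}^{2/p-1}\|F\|_{2}^{2-2/p}$ applied to $F=S_{R}^{\alpha}(x,\cdot)\mathbf{1}_{\{|x-\cdot|\ge r\}}$ does reproduce the stated exponents exactly (the $R$-power $R^{d/2q}$ and the decay $-\alpha-1/2+d(1/p-1/2)$ both check out), your on-diagonal $L^{2}$ computation via $\int\Phi_{n}(x,y)^{2}\,dy=\Phi_{n}(x,x)$ and the heat-kernel comparison $\sum_{2n+d\le R}\Phi_{n}(x,x)\lesssim R^{d/2}$ is correct, and you have correctly located the sole use of $\alpha>(d-1)/2$ at the $p=1$ endpoint (the condition $p(\alpha+\frac{d+1}{2})>d$ is exactly what the Euclidean-model scaling forces).

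The genuine gap is that the two estimates carrying all the analytic content of the lemma are asserted rather than proved. For the $L^{2}$ endpoint with $r>0$ you offer two alternatives: a dyadic-annulus summation, which presupposes a pointwise kernel bound that you yourself (correctly) say is unavailable at the required strength; and an integration-by-parts argument against the Schr\"odinger propagator, which is plausible in outline but is precisely where the transition-region (Airy) behaviour of the Hermite functions near $|x|\sim R^{1/2}$ must be confronted, and none of that is carried out. For the $L^{1}$ endpoint, the spherical-harmonics/Laguerre decomposition of the eigenspace kernel $\Phi_{n}(x,y)$ is a correct structural fact, but the conclusion then rests on kernel bounds for Laguerre Bochner--Riesz means of type $k+\frac{d}{2}-1$ that are \emph{uniform in the spherical-harmonic degree $k$}, and you explicitly flag this uniformity as the main obstacle without resolving it; summing the harmonic pieces without such uniformity proves nothing. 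In short, what you have is a correct and well-informed roadmap that reduces the lemma to the same hard estimates Thangavelu establishes in \cite{T2,T3}; as a self-contained proof it is incomplete, though it is no less complete than the paper's own treatment, which is a bare citation.
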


Now we have all ingredients to study the mean convergence of Bochner-Riesz means. 
\begin{thm}\label{tc}\rm
Let $1\leq p<\infty$ and $f\in L_{p}(\mathcal{N})$. Then
 \begin{itemize}
 \item[(i)] for $d=1$ and $\alpha>1/6$,
\begin{equation*}
\|S_{R}^{\alpha}f-f\|_{p}\rightarrow0\  \mbox{as}\  R\rightarrow\infty;
\end{equation*}

\item [(ii)] for $d\geq2$ and $\alpha>\frac{d-1}{2}$,
\begin{equation*}
\|S_{R}^{\alpha}f-f\|_{p}\rightarrow0\  \mbox{as}\  R\rightarrow\infty.
\end{equation*}
\end{itemize}
\end{thm}
\begin{proof}
By decomposing $f = f_{1}-f_{2} +i(f_{3}-f_{4})$ with positive $f_{k}$ ($k=1,2,3,4$),
we can assume $f$  is positive.
By Lemma \ref{14}, 
\begin{equation}\label{5}
-C\{E_{R}(x-y)+E_{R}(x+y)\}\leq S_{R}^{\alpha}(x,y)\leq C\{E_{R}(x-y)+E_{R}(x+y)\}
\end{equation}
where $E_{R}(x)=R^{1/2}(1+R^{1/2}|x|)^{-\alpha-5/6}$. 
Set $\widetilde{f}(x)=f(-x)$ and $E_{R}f(x)=f\ast E_{R}(x)$. Then by (\ref{5}), we deduce that
\begin{equation} \label{6}
-C\{E_{R}f(x)+E_{R}\widetilde{f}(x)\}\leq S_{R}^{\alpha}f(x)\leq C\{E_{R}f(x)+E_{R}\widetilde{f}(x)\}.
\end{equation}

Note that $E_{R}(x)$ is an $L_{1}(\R)$ function when $\alpha>1/6$. Thus, by Young's inequality, it follows immediately that $\|E_{R}f\|_{p}\lesssim\|f\|_{p}$ and $\|E_{R}\widetilde{f}\|_{p}\lesssim\|f\|_{p}$. Therefore, we obtain the uniform boundedness of $S_{R}^{\alpha}$ thanks to (\ref{6}). On the other hand,  $S_{R}^{\alpha}f$ converges to $f$ in $L_{p}$-norm whenever $f\in C_{c}^{\infty}(\R)\otimes \mathcal{S}_{\M}$ (see e.g. \cite[Exercise 6.2.9]{Gra2008}). Consequently, a density argument implies $S_{R}^{\alpha}f$ converges to $f$ in $L_{p}$-norm. This proves (i).

We now turn to (ii). Letting $r\rightarrow0$ in Lemma \ref{15} with $p=1$, we see that $S_{R}^{\alpha}(x,y)$ are uniformly integrable for $\alpha>\frac{d-1}{2}$. This gives the uniform boundedness of $S_{R}^{\alpha}$. Together with the density argument, we complete the proof.
\end{proof}
\subsection{Pointwise convergence}
In this subsection, we study the pointwise convergence of the Bochner-Riesz means by showing the corresponding noncommutative maximal inequalities of the sequence $(S_{R}^{\alpha})_{R>0}$. 
Before it, we present some necessary lemmas. The first one is well-known to experts, its proof can be found in \cite[Theorem 4.3]{CXY13}.
\begin{lem}[\cite{CXY13}]\label{result of cxy}\rm
Let $\psi$ be an integrable function on $\R^d$ such that $|\psi|$ is radial and radially decreasing.
Let $\psi_t(x)=\frac1{t^d}\, \psi(\frac xt)$ for $x\in\R^d$ and $t>0$.
\begin{itemize}
 \item [(i)] Let $f \in L_{1}(\mathcal{N})$. Then for  any $\lambda>0$ there exists a projection $e\in \mathcal{N}$ such that
 $$\sup_{t>0}\big\|e(\psi_t\ast f)e\big\|_\infty\leq\lambda\quad\text{and}\quad \varphi (e^{\perp})\leq C_d\|\psi\|_1\frac{\|f\|_1}\lambda,$$
where $e^{\perp}=1_\n-e$.

\item [(ii)] Let $1<p\leq\infty$. Then
 $$\big\|{\sup_{t>0}}^+\psi_t*f\big\|_p\le C_d\|\psi\|_1\,\frac{p^2}{ (p-1)^2}\|f\|_p,\quad\forall\; f\in L_p(\mathcal{N}).$$
\end{itemize}
\end{lem}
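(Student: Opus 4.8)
The plan is to reduce both assertions to the operator-valued Hardy--Littlewood maximal inequality for averages over Euclidean balls, which is due to Mei \cite{M} (see also \cite{JP1,HX}) and rests on the noncommutative Doob inequality \cite{J1} for the strong bound and on Cuculescu's construction \cite{Cuc} for the weak bound. Writing $A_r g(x)=|B(0,r)|^{-1}\int_{B(0,r)} g(x-y)\,dy$ for the centered ball average, the input I would take for granted is: (a) for $g\in L_1(\mathcal N)_+$ and $\lambda>0$ there is a projection $e\in\mathcal N$ with $\sup_{r>0}\|eA_rg(x)e\|_\infty\le\lambda$ for all $x$ and $\varphi(e^\perp)\le C_d\|g\|_1/\lambda$; and (b) for $1<p\le\infty$ and $g\in L_p(\mathcal N)_+$ there is $a\in L_p(\mathcal N)_+$ with $A_rg(x)\le a$ for all $r>0,x\in\R^d$ and $\|a\|_p\le C_d\,\tfrac{p^2}{(p-1)^2}\|g\|_p$. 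We may assume $\psi$ is real-valued (split into real and imaginary parts, each dominated by $|\psi|$ in modulus) and $f\ge 0$ (split $f=f_1-f_2+i(f_3-f_4)$ into positive parts, $\|f_k\|_p\le\|f\|_p$, and use the triangle inequality in $L_p(\mathcal N;\ell_\infty)$, resp.\ $e=\bigwedge_k e_k$ with level $\lambda/4$ in the weak case); note that we do \emph{not} assume $\psi\ge 0$, only that $|\psi|$ is radial and radially decreasing.

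The first concrete step is a layer-cake identity. Writing $|\psi|(x)=\phi(|x|)$ with $\phi$ nonincreasing on $[0,\infty)$ and setting $d\nu=-d\phi$ (a positive Borel measure on $(0,\infty)$), one has $|\psi|(x)=\int_0^\infty \mathbf 1_{B(0,u)}(x)\,d\nu(u)$, hence after dilation
\begin{equation*}
|\psi|_t(x)=c_d\int_0^\infty u^d\Big(\tfrac{1}{|B(0,tu)|}\mathbf 1_{B(0,tu)}(x)\Big)\,d\nu(u),\qquad c_d=|B(0,1)|,\quad \|\psi\|_1=c_d\int_0^\infty u^d\,d\nu(u).
\end{equation*}
Convolving against $f\ge 0$ and using Fubini--Tonelli --- legitimate first for $f$ in the dense subalgebra $C_c^\infty(\R^d)\otimes\mathcal S_\M$ and then by density --- gives the operator identity $|\psi|_t* f(x)=c_d\int_0^\infty u^d\,A_{tu}f(x)\,d\nu(u)$. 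Since $-|\psi(y)|\le\psi(y)\le|\psi(y)|$ as scalars and $f(x-y)\ge 0$, multiplying by $f(x-y)$ and integrating yields the two-sided operator bound $-\,|\psi|_t* f(x)\le \psi_t* f(x)\le |\psi|_t* f(x)$ for every $t>0$ and every $x$.

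Now part (ii): with $f\ge 0$, let $a\in L_p(\mathcal N)_+$ be the dominating element from (b). For every $t>0$, $A_{tu}f(x)\le a$ inserted in the identity gives $|\psi|_t* f(x)\le c_d\big(\int u^d\,d\nu\big)\,a=\|\psi\|_1 a$, so the two-sided bound yields $-\|\psi\|_1 a\le \psi_t* f(x)\le \|\psi\|_1 a$ for all $t,x$; Remark \ref{rk:MaxFunct} then gives $\|{\sup_{t>0}}^+\psi_t* f\|_p\le\|\psi\|_1\|a\|_p\le C_d\,\tfrac{p^2}{(p-1)^2}\|\psi\|_1\|f\|_p$, and the four-term splitting of general $f$ finishes it. For part (i): given $f\ge 0$ and $\lambda>0$, apply (a) to $f$ at level $\lambda/\|\psi\|_1$ to obtain a projection $e$ with $\sup_{r,x}\|eA_rf(x)e\|_\infty\le\lambda/\|\psi\|_1$ and $\varphi(e^\perp)\le C_d\|\psi\|_1\|f\|_1/\lambda$; then $e\,|\psi|_t* f(x)\,e=c_d\int_0^\infty u^d\,eA_{tu}f(x)e\,d\nu(u)\le\|\psi\|_1\cdot(\lambda/\|\psi\|_1)=\lambda$, and the two-sided bound gives $\|e\,\psi_t* f(x)\,e\|_\infty\le\lambda$ for all $t>0$, after which the splitting of general $f$ into four positive parts (with $e=\bigwedge_k e_k$ and level $\lambda/4$) completes the argument. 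The only genuinely substantial ingredient is the operator-valued Hardy--Littlewood maximal inequality (a)--(b); the rest is the scalar layer-cake reduction together with the positive-cone description of $L_p(\mathcal N;\ell_\infty)$. The one point requiring care is the measurability and convergence of the $L_p(\mathcal N)$-valued integral $\int_0^\infty u^d A_{tu}f(x)\,d\nu(u)$ and its interchange with the spatial convolution, which is why I would establish the identities first on $C_c^\infty(\R^d)\otimes\mathcal S_\M$ and pass to the limit; the uncountable supremum over $t>0$ causes no difficulty since the projection in (i) and the dominating element in (ii) are produced independently of $t$.
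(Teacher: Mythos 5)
The paper does not prove this lemma itself---it simply cites \cite[Theorem 4.3]{CXY13}---and your argument is correct and is essentially the standard proof given there: dominate $|\psi|_t$ by a superposition $c_d\int_0^\infty u^d\,|B(0,tu)|^{-1}\mathbf 1_{B(0,tu)}\,d\nu(u)$ of normalized ball averages via the layer-cake formula, then invoke the operator-valued Hardy--Littlewood maximal inequality (strong type from Doob/Mei, weak type from Cuculescu) and the positivity reductions. Your handling of the two-sided bound $-|\psi|_t*f\le\psi_t*f\le|\psi|_t*f$ for $f\ge0$ and the rescaling of $\lambda$ by $\|\psi\|_1$ in the weak-type case are both sound, so there is nothing to add.
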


The second lemma connecting Bochner-Riesz means of different order is useful.
\begin{lem}\label{thu}\rm
	Let $\beta,\delta$ be two complex numbers such that $\mathrm{Re}\beta>0$, $\mathrm{Re}\delta>-1$ and $\mathrm{Re}(\beta+\delta)>0$. Then
	\begin{equation*} 
	S_{R}^{\delta+\beta}=\frac{\Gamma(\delta+\beta+1)}{\Gamma(\delta+1)\Gamma(\beta)}
	\int_{0}^{1}(1-t)^{\beta-1}t^\delta S_{Rt}^{\delta}dt.
	\end{equation*}
\end{lem}
\begin{proof}
By definition, it is sufficient to verify
	\begin{equation*}
	\Big(1-\frac{N}{R}\Big)^{\delta+\beta}=\frac{\Gamma(\delta+\beta+1)}{\Gamma(\delta+1)\Gamma(\beta)}
	\int_{N/R}^{1}(1-t)^{\beta-1}t^\delta\Big(1-\frac{N}{Rt}\Big)^\delta dt
\end{equation*}
for $\mathrm{Re}\beta>0$, $\mathrm{Re}\delta>-1$ and $\mathrm{Re}(\beta+\delta)>0$, where $N=2n+d$. 
The remaining argument is quite similar as in
\cite[Lemma 4]{Stein1958}.
\end{proof}

\begin{thm}\label{t6}\rm
Let $S_{R}^{\alpha}$ be defined in (\ref{993}) with $d=1$ and $\alpha>1/6$. 
\begin{enumerate}
\item [(i)] Let $f \in L_{1}(\mathcal{N})$. Then $(S_{R}^{\alpha})_{R>0}$ is of weak type $(1,1)$, that is for any $\lambda>0$ there exists a projection $e\in \mathcal{N}$ satisfying
$$\| eS_{R}^{\alpha}fe\|_{\infty}\lesssim\ \lambda\ \mbox{for\ all}\ R>0\ \ \ \mbox{and} \ \ \ \varphi (e^{\perp})\lesssim \frac{\| f\|_{1}}{\lambda}.$$

\item [(ii)] Let $1<p\leq\infty$. Then
$$\|\sup_{R>0}\!^{+}S_{R}^{\alpha}f\|_{p}\lesssim\| f\|_{p},\ \forall f\in L_{p}(\mathcal{N}).$$

\item [(iii)] For any $f\in L_{p}(\mathcal{N})$ with $1\leq p<\infty$,
$$S_{R}^{\alpha}f\xrightarrow{\rm b.a.u}f\  \mbox{as}\  R\rightarrow\infty.$$
\end{enumerate}
\end{thm}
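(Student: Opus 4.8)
The plan is to reduce everything to the convolution-type bound already recorded in Lemma \ref{14} and then invoke the operator-valued maximal inequality of Lemma \ref{result of cxy}, exactly as in the proof of Theorem \ref{tc}(i). First I would recall from \eqref{5}--\eqref{6} that, writing $\widetilde f(x)=f(-x)$, $E_R(x)=R^{1/2}(1+R^{1/2}|x|)^{-\alpha-5/6}$ and $E_Rf=f*E_R$, one has the two-sided operator inequality
\[
-C\bigl(E_Rf(x)+E_R\widetilde f(x)\bigr)\le S_R^\alpha f(x)\le C\bigl(E_Rf(x)+E_R\widetilde f(x)\bigr)
\]
for positive $f$; by the usual decomposition $f=f_1-f_2+i(f_3-f_4)$ into positive parts it suffices to treat positive $f$. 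Set $\psi(x)=(1+|x|)^{-\alpha-5/6}$, which for $\alpha>1/6$ is integrable on $\R$, radial and radially decreasing, and observe $E_R(x)=R^{1/2}\psi(R^{1/2}x)=\psi_t(x)$ with $t=R^{-1/2}$. Thus the family $(E_R)_{R>0}$ is exactly the dilation family $(\psi_t)_{t>0}$ of Lemma \ref{result of cxy} (the change of variable $R\leftrightarrow t=R^{-1/2}$ is an order-preserving bijection of $(0,\infty)$, so the supremum over $R>0$ is the supremum over $t>0$).

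For (i): given $f\in L_1(\mathcal N)$ and $\lambda>0$, apply Lemma \ref{result of cxy}(i) to $f$ and to $\widetilde f$ (note $\|\widetilde f\|_1=\|f\|_1$) with parameter $\lambda/(4C)$, obtaining projections $e_1,e_2\in\mathcal N$ with $\sup_{t>0}\|e_i(\psi_t*g)e_i\|_\infty\le\lambda/(4C)$ and $\varphi(e_i^\perp)\lesssim\|f\|_1/\lambda$. Put $e=e_1\wedge e_2$; then $\varphi(e^\perp)\le\varphi(e_1^\perp)+\varphi(e_2^\perp)\lesssim\|f\|_1/\lambda$, and from the two-sided inequality above, $e(E_Rf)e\le e(e_1(E_Rf)e_1)e\le\lambda/(4C)$ (using $0\le e\le e_1$ and positivity of $E_Rf$), similarly for $E_R\widetilde f$, whence $\|eS_R^\alpha f e\|_\infty\le C\cdot 2\cdot\lambda/(4C)\le\lambda$ for all $R>0$. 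For (ii): for $1<p\le\infty$, Remark \ref{rk:MaxFunct} gives $a_i\in L_p(\mathcal N)_+$ with $-a_i\le \psi_t*g\le a_i$ for all $t$ and $\|a_i\|_p\lesssim\|f\|_p$ (applying Lemma \ref{result of cxy}(ii) to $f$ and $\widetilde f$); then $a=C(a_1+a_2)$ dominates $S_R^\alpha f$ for all $R$ and $\|a\|_p\lesssim\|f\|_p$, which by Remark \ref{rk:MaxFunct} is precisely $\|\sup_{R>0}^+S_R^\alpha f\|_p\lesssim\|f\|_p$.

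For (iii): fix $1\le p<\infty$. On the dense subspace $C_c^\infty(\R^d)\otimes\mathcal S_{\M}$ of $L_p(\mathcal N)$ one checks directly (as in the proof of Theorem \ref{tc}(i), where uniform boundedness plus convergence on this class was used) that $S_R^\alpha f\to f$ in $L_p$-norm; combined with the uniform $L_\infty$-type control furnished by the maximal bound in (ii), a standard argument upgrades this to $S_R^\alpha f\xrightarrow{\rm a.u.}f$ for $f$ in the dense subspace. Then by (i) the family $(S_R^\alpha)$ is of weak type $(1,1)$, and for $p>1$ of strong type $(p,p)$; in either case Proposition \ref{prop:Phix-x in Lp M C0} (applied along a countable sequence $R_n\to\infty$, or in its continuous-parameter form) promotes a.u.\ convergence on the dense subspace to b.a.u.\ convergence on all of $L_p(\mathcal N)$. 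To obtain a.u.\ (not merely b.a.u.) convergence, I would use that the difference $S_R^\alpha f-f$ is controlled by the one-sided maximal operator $\sup_{R}{}^+|S_R^\alpha f-f|$ and run the Banach-principle argument on the one-sided maximal function, whose boundedness follows from (i)--(ii) together with the trivial bound for $f$ itself; this is the point that needs the most care. The main obstacle is therefore not the convolution estimate — which is handed to us by Lemma \ref{14} — but the passage from maximal inequalities to genuine a.u.\ convergence: one must carefully set up the approximation so that the error terms are dominated in the one-sided $L_p(\mathcal M;\ell_\infty)$ sense, since in the noncommutative setting a.u.\ convergence is strictly stronger than b.a.u.\ convergence and Proposition \ref{prop:Phix-x in Lp M C0} as stated only delivers the latter.
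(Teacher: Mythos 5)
Your treatment of (i) and (ii) is exactly the paper's argument: dominate $S_R^\alpha f$ two-sidedly by $C(E_Rf+E_R\widetilde f)$ via Lemma \ref{14}, recognize $E_R=\psi_{R^{-1/2}}$ as the dilation family of Lemma \ref{result of cxy}, apply that lemma to $f$ and $\widetilde f$ and take $e=e_1\wedge e_2$ for the weak type estimate (resp.\ combine the two dominating operators via Remark \ref{rk:MaxFunct} for the strong type estimate). For (iii) the paper simply cites Proposition \ref{prop:Phix-x in Lp M C0} together with Theorem \ref{tc}; your observation that this proposition as stated only delivers b.a.u.\ convergence, whereas the theorem asserts a.u.\ convergence, is a fair point about the paper's own one-line proof rather than a defect in yours, and your proposed remedy (a one-sided maximal/Banach-principle argument) is the standard way to close that gap.
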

\begin{proof}
(i) 
Without loss of generality, we may assume $f$ is positive. Note that the Bochner-Riesz kernel $S_{R}^{\alpha}(x,y)$ is real-valued since $\alpha$ is positive; moreover, 
from (\ref{6}) in the proof of Theorem \ref{tc}, we know that
\begin{equation} \label{346}
-(E_{R}f+E_{R}\widetilde{f})\lesssim S_{R}^{\alpha}f\lesssim E_{R}f+E_{R}\widetilde{f}.
\end{equation}

If we set $E(x)=(1+|x|)^{-\alpha-5/6}$, then $E_{R}(x)=R^{1/2}E(R^{1/2}x)$.
It is clear that $E$ is an integrable function on $\R^{d}$. Moreover, $E$ is radial and radially decreasing. Therefore, by Lemma \ref{result of cxy} (i), there exists a projection $e_{1}\in\mathcal{N}$ such that for any $R>0$
$$\varphi(e_{1}^{\perp})\lesssim \frac{\|f\|_{1}}{\lambda}\ \ \mbox{and}\ \ -\frac{\lambda}{2}\leq e_{1} E_{R}fe_{1}\leq\frac{\lambda}{2}.$$

Similarly, we can find a projection $e_{2}\in\mathcal{N}$ such that for any $R>0$
$$\varphi(e_{2}^{\perp})\lesssim \frac{\|\widetilde{f}\|_{1}}{\lambda}=\frac{\|f\|_{1}}{\lambda}\ \ \mbox{and}\ \ -\frac{\lambda}{2}\leq e_{2} E_{R}\widetilde{f}e_{2}\leq\frac{\lambda}{2}.$$

Set $e=e_{1}\wedge e_{2}$. We then deduce that for any $R>0$,
$$-\lambda\leq -(e_{1}E_{R}fe_{1}+e_{2}E_{R}\widetilde{f}e_{2})\lesssim  eS_{R}^{\alpha}fe\lesssim e_{1}E_{R}fe_{1}+e_{2}E_{R}\widetilde{f}e_{2}\leq\lambda,$$
which implies 
$$\sup_{R>0}\|eS_{R}^{\alpha}fe\|_\infty\lesssim\lambda\ \ \mbox{and}\ \ \varphi(e^{\perp})\leq\varphi(e_{1}^{\perp})+\varphi(e_{2}^{\perp})\lesssim \frac{\|f\|_{1}}{\lambda}.$$
This establishes (i).

To prove part (ii), just by noting (\ref{346}), Remark \ref{rk:MaxFunct}, triangle's inequality in $L_p(\n;\ell_\infty)$ and Lemma \ref{result of cxy} (ii), we find
\begin{align*}
	\|{\sup_{R>0}}^{+}S_{R}^{\alpha}f\|_{p}
	\lesssim&\ \|{\sup_{R>0}}^{+}E_{R}f+E_{R}\widetilde{f}\|_{p}\\
	\leq&\ \|{\sup_{R>0}}^{+}E_{R}f\|_{p}+\|{\sup_{R>0}}^{+}E_{R}\widetilde{f}\|_{p}\lesssim\|f\|_{p}.
\end{align*}

(iii) The pointwise convergence results can be obtained as a byproduct of the previous maximal inequalities through a standard verification, see e.g. \cite[Section 7]{HLX}. So we omit the proof.
\end{proof}
Let us now consider a deeper insight into the one-dimensional case. If $\alpha$ is smaller than the critical index $1/6$, Theorem \ref{t6} (ii) usually fail even in the scalar case (see \cite{T1}). However, we can improve this result by proving Theorem \ref{local} below, which is the noncommutative analogue of Stein's theorem \cite{Stein1958} for Bochner-Riesz means associated to Hermite operator. 
\begin{thm}\label{local}\rm
Assume that $d=1$ and $\alpha>1/3\big|1/2-1/p\big|$. Then for any $f\in L_{p}(\mathcal{N})$ with $1< p<\infty$, we have

\begin{enumerate}

\item [(i)] $\| {\sup\limits_{R>0}}^{+}S_{R}^{\alpha}f\|_{p}\lesssim \| f\|_{p}$, where the implicit constant depends on $p$, $d$ and $\alpha$.

\item [(ii)] $\|S_{R}^{\alpha}f-f\|_{p}\rightarrow0\  as\ R\rightarrow\infty.$

\item [(iii)] $S_{R}^{\alpha}f\xrightarrow{\rm b.a.u}f\  \mbox{as}\  R\rightarrow\infty$.
\end{enumerate}
\end{thm}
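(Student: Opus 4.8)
The plan is to deduce all three conclusions from a single strong type $(p,p)$ maximal estimate, obtained by complex interpolation in the spirit of Stein's analytic interpolation theorem. First I would fix $1<p<\infty$ and note that it suffices to prove (ii): once the maximal inequality $\|{\sup_{R>0}}^{+}S_{R}^{\alpha}f\|_{p}\lesssim\|f\|_{p}$ is available for $\alpha>\frac13|1/2-1/p|$, conclusion (i) follows from the uniform boundedness together with density of $C_{c}^{\infty}(\R)\otimes\mathcal S_{\M}$ (as in the proof of Theorem \ref{tc}), and conclusion (iii) follows from Proposition \ref{prop:Phix-x in Lp M C0} combined with (i). So the heart of the matter is the maximal estimate.

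For (ii), I would introduce the analytic family of operators $S_{R}^{z}$ for $z$ in the strip $0\le\mathrm{Re}\,z\le M$ (some large fixed $M$), and use the composition formula (\ref{994}) to write $S_{R}^{\alpha+\beta}$ as an average of $S_{Rt}^{\alpha}$ over $t\in(0,1)$. The two endpoints are: on the line $\mathrm{Re}\,z=0$, an $L_{2}$ bound that follows from Parseval's identity (\ref{992}) — more precisely one needs an $L_{2}(\mathcal N)$ maximal bound for $S_{R}^{z}$ with $\mathrm{Re}\,z$ slightly positive, which is the delicate endpoint and is where the gain $\alpha>0$ at $p=2$ comes from; and on the line $\mathrm{Re}\,z=M$ large, the strong type $(p,p)$ maximal bound from Theorem \ref{t6}(ii) (valid for $\mathrm{Re}\,z>1/6$), applied with $M>1/6$. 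For the $L_{2}$ endpoint I would follow Thangavelu/Stein: decompose $S_{R}^{z}$ dyadically in $R$, say $R\in[2^{k},2^{k+1})$, control the maximal function over each dyadic block by a square function $\big(\sum_{k}\sup_{R\sim2^{k}}|S_{R}^{z}f|^{2}\big)^{1/2}$ at the cost of an extra derivative in $R$ (using $\sup_{R\sim2^{k}}|g_{R}|^{2}\lesssim |g_{2^{k}}|^{2}+2^{k}\int_{2^{k}}^{2^{k+1}}|g_{R}||\partial_{R}g_{R}|\,dR$ and Cauchy-Schwarz, which in the noncommutative setting must be done via Lemma \ref{equa} and the Cauchy-Schwarz inequality (\ref{CS}) for operator-valued square functions), then sum the blocks using the kernel estimate of Lemma \ref{14} and orthogonality; here is precisely where $\mathrm{Re}\,z>0$ is needed so the $R$-derivative of the multiplier $(1-(2n+d)/R)_{+}^{z}$ is integrable against the spectral measure. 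Interpolating between $\mathrm{Re}\,z=0$ (the $L_{2}$ estimate, with an $\epsilon$ loss of smoothness) and $\mathrm{Re}\,z=M>1/6$ (the $L_{p}$ estimate from Theorem \ref{t6}) via Stein's analytic interpolation, adapted to the noncommutative $L_p(\mathcal N;\ell_\infty)$ spaces and the vector-valued maximal norms, yields the claimed range $\alpha>\frac13|1/2-1/p|$ after optimizing; the exponent $1/3$ reflects the $\alpha+5/6$ decay in Lemma \ref{14} relative to the half-derivative lost in passing to the square function.

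The main obstacle I expect is carrying out the $L_{2}$ maximal endpoint and the analytic interpolation in the noncommutative setting. Classically one freely uses pointwise Cauchy-Schwarz and the scalar-valued Stein interpolation; here one must instead work with $L_{p}(\mathcal N;\ell_{\infty}^{c})$ and $L_{p}(\mathcal N;\ell_{2}^{rc})$ norms, replace pointwise inequalities by the operator versions (Remark \ref{rk:MaxFunct}, Lemma \ref{equa}, (\ref{CS})), and verify that Stein's interpolation theorem applies to the relevant analytic families of maps between these operator spaces (which it does, by the vector-valued interpolation machinery for noncommutative $L_{p}$). A secondary technical point is that, unlike the clean $L_{1}$ endpoint used for $\alpha>1/6$ in Theorem \ref{t6}, here the low-$p$ case $1<p<2$ and the high-$p$ case $2<p<\infty$ give the same bound $\frac13|1/2-1/p|$ by duality of the maximal inequalities, so after proving $2\le p<\infty$ one should recover $1<p<2$ either by a direct dual argument using Proposition \ref{JX2007} or by symmetry in the interpolation; I would organize the write-up to handle $p\ge2$ first and then dualize.
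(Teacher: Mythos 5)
Your overall architecture coincides with the paper's: reduce everything to the maximal inequality (ii), establish two endpoints --- an $L_{2}$ maximal bound for all orders with positive real part and the $L_{p}$ maximal bound of Theorem \ref{t6} for $\mathrm{Re}\,z>1/6$ --- and combine them by Stein's complex interpolation adapted to $L_{p}(\mathcal N;\ell_{\infty})$ via Proposition \ref{JX2007} and \cite[Proposition 2.5]{JX}. The gap is in your $L_{2}$ endpoint. You propose the dyadic/fundamental-theorem-of-calculus square function and assert that $\mathrm{Re}\,z>0$ is what makes ``the $R$-derivative of the multiplier integrable against the spectral measure.'' But $\partial_{R}(1-N/R)_{+}^{z}=z(1-N/R)_{+}^{z-1}N/R^{2}$, and the quantity that actually controls the dyadic square function after Cauchy--Schwarz and Plancherel is $\int_{N}^{\infty}|\partial_{R}(1-N/R)_{+}^{z}|^{2}R\,dR=|z|^{2}\int_{0}^{1}(1-s)^{2\mathrm{Re}z-2}s\,ds$, which is finite only for $\mathrm{Re}\,z>1/2$ (and the $L_{1}$-in-$R$ variant fares no better, since $\|\partial_{R}S_{R}^{z}\|_{L_{2}\to L_{2}}=\infty$ for $\mathrm{Re}\,z<1$). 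So your argument, as written, yields the $L_{2}$ maximal bound only for $\alpha>1/2$, and interpolating from there cannot produce the sharp exponent $\tfrac13|1/2-1/p|$, which degenerates to $0$ as $p\to2$. The missing idea is Stein's difference trick, which is exactly what the paper uses: write $\alpha=\beta+\delta$ with $\beta>1/2$, $\delta>-1/2$, use (\ref{994}) to express $S_{R}^{\beta+\delta}-c\,S_{R}^{\beta+\delta+1}$ as $C_{\beta,\delta}R^{-\beta}\int_{0}^{R}(R-t)^{\beta-1}(S_{t}^{\delta}-S_{t}^{\delta+1})\,dt$, and dominate it via (\ref{CS}) by the fixed operator $G^{\delta}(f)=\big(\int_{0}^{\infty}|S_{r}^{\delta+1}f-S_{r}^{\delta}f|^{2}\,dr/r\big)^{1/2}$. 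The multiplier of the difference, $-(N/r)(1-N/r)_{+}^{\delta}$, carries the extra factor $N/r$ vanishing at the spectral edge, so $\int_{N}^{\infty}(N/r)^{2}(1-N/r)^{2\delta}\,dr/r<\infty$ for all $\delta>-1/2$, and Parseval gives $\|G^{\delta}f\|_{2}\lesssim\|f\|_{2}$; the remaining term $S_{R}^{\beta+\delta+1}$ has order $>1$ and is handled by Theorem \ref{t6}.

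A secondary but real problem is your plan to ``handle $p\ge2$ first and then dualize.'' Dualizing the maximal inequality at exponent $p$ yields the $\ell_{1}$-valued inequality at $p'$ (by the duality $L_{p}(\mathcal N;\ell_{\infty})^{*}\supset L_{p'}(\mathcal N;\ell_{1})$), not the maximal inequality at $p'$; these are genuinely different statements. The paper avoids this by running the complex interpolation twice with different endpoints: for $1<p<2$ it interpolates between $p=2$ (order $\alpha_{0}>0$) and some $p_{1}>1$ close to $1$ (order $\alpha_{1}>1/6$), and for $p>2$ it takes $p_{1}=\infty$. You should do the same rather than appeal to duality.
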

\begin{proof}
It suffices to prove the maximal inequality (i). Indeed, by Remark \ref{rk:MaxFunct}, part (i) implies that 
$$\sup_{R>0}\|S_{R}^{\alpha}f\|_{p}\leq\| \sup_{R>0}\!^{+}S_{R}^{\alpha}f\|_{p}\lesssim\| f\|_{p}.$$
Together with the density of $C_{c}^{\infty}(\R^{d})\otimes \mathcal{S}_{\M}$ in $L_{p}(\mathcal{N})$, we obtain (ii). On the other hand, conclusion (iii) can be proved by (i) via a standard verification. Thus the remainder  is devoted to the proof of part (i) and the idea is inspired by  Stein in classical setting \cite{Stein1958} and Chen et al on quantum tori \cite{CXY13}. 

By Proposition \ref{JX2007} (i), it suffices to show for any finite subset $J\subset(0,\infty)$,
\begin{equation} \label{202}
	\| \sup_{R\in J}\!^{+}S_{R}^{\alpha}f\|_{p}\lesssim\| f\|_{p},\ \ \forall f\in L_{p}(\mathcal{N}).
\end{equation}

In the following, we fix $J$ and $f\in L_{p}(\mathcal{N})$.
For clarity we divide the proof of (\ref{202}) into three steps.

\medskip\noindent {\it Step 1}. For $\alpha\in\C$, $\mathrm{Re}\alpha>1/6$ and $1<p\leq\infty$, we prove that
\begin{equation} \label{2012}
\| \sup_{R\in J}\!^{+}S_{R}^{\alpha}f\|_{p}\lesssim\| f\|_{p}.
\end{equation}
To see this, choose $\delta>0$ and $\beta\in\C$ such that $\mathrm{Re}\alpha>\delta>1/6$ and $\alpha=\delta+\beta$. By Lemma \ref{thu}, we have the following equality:
\begin{equation} \label{20}
S_{R}^{\alpha}=C_{\beta,\delta}\int_{0}^{1}(1-t)^{\beta-1}t^\delta S_{Rt}^{\delta}dt,
\end{equation}
where $C_{\beta,\delta}=\Gamma(\beta+\delta+1)/\Gamma(\delta+1)\Gamma(\beta).$
Since $\mathrm{Re}\beta=\mathrm{Re}\alpha-\delta>0$ and $\delta>0$, we have
\begin{equation}\label{2001}
	\int^1_0 |(1 - t)^{\beta -1}t^\delta|dt=\int^1_0(1 - t)^{Re\beta -1}t^\delta dt<\infty.
\end{equation}

Therefore, combining (\ref{2001}) with 
Theorem \ref{t6} (ii) and triangle's inequality in $L_p(\n;\ell_\infty)$, 
we conclude that
$$
\| \sup_{R\in J}\!^{+}S_{R}^{\alpha}f\|_{p}
\leq |C_{\beta, \delta}|\int^1_0 (1 - t)^{\mathrm{Re}\beta -1}t^\delta dt \big \| {\sup\limits_{R\in J}}^+ S_R^{\delta}(f) \big \|_p\\
\lesssim  \| f \|_p.
$$

\medskip

\noindent{\it Step 2}.For $\alpha\in\C$ with $\mathrm{Re}\alpha>0$, we show that
\begin{equation}\label{2013}
	\big \| {\sup_{R\in J}}^{+}S_R^{\alpha}(f)\big\|_2 \lesssim \|f\|_2.
\end{equation}
Indeed, by choosing $\delta>0$, $\beta\in\C$ such that $\mathrm{Re}\alpha>\delta>0$ with $\alpha=\delta+\beta$ and using the same argument in {\it Step 1}, we can reduce to proving (\ref{2013}) when $\alpha>0$.

We first consider the case $ \alpha>1/2$. Choose
$\beta > 1$ and $\delta > -1/2$ such that $\alpha = \beta + \delta.$ By Lemma \ref{thu} and integration by parts,
$$S_{R}^{\beta+\delta}=C_{\beta,\delta}\int_{0}^{1}\psi(t)M^\delta_{Rt}dt,$$
where $M^\delta_{t}=\frac{1}{t}\int_{0}^{t}S_{r}^{\delta}dr$
and $\psi(t)=(\beta-1)(1-t)^{\beta-2}t^{\delta+1}-\delta(1-t)^{\beta-1}t^\delta.$
A simple calculation shows that $\int_{0}^{1}|\psi(t)|dt<\infty$. Hence, we  use again triangle's inequality in $L_p(\n;\ell_\infty)$, 
$$\big \| {\sup\limits_{R\in J}}^+ S_R^{\delta}(f) \big \|_2\leq|C_{\beta, \delta}|\int^1_0 |\psi(t)|dt\big \| {\sup\limits_{R\in J}}^+ M_R^{\delta}(f) \big \|_2 \lesssim\big \| {\sup\limits_{R\in J}}^+ M_R^{\delta}(f) \big \|_2.$$

Therefore, it suffices to show if $\delta > -1/2$
\begin{equation}\label{20153}
	\big \| {\sup\limits_{R\in J}}^+ M_R^{\delta}(f) \big \|_2\lesssim \|f\|_2.
\end{equation}
Let $(R_n)_{n}$ be any fixed sequence in $J$ and $(g_n)$ be a sequence of positive elements in $L_{2}(\mathcal{N})$ such that $\|\sum_ng_n\|_2\leq1$. Then by Proposition \ref{JX2007} (ii),
\begin{align*}
	\Big | \varphi \Big (  \sum_n M_{R_n}^{\delta}(f) g_n \Big ) \Big |
	\leq&\ \Big | \varphi \Big (  \sum_n M_{R_n}^{\delta+1}(f) g_n \Big ) \Big |+\Big | \varphi \Big (  \sum_n [M_{R_n}^{\delta+1}(f)-M_{R_n}^{\delta}(f)] g_n \Big ) \Big |\\
	\leq&\ \big \| {\sup\limits_{R\in J}}^+ M_R^{\delta+1}(f) \big \|_2+\Big | \varphi \Big (  \sum_n G_{R_n}^{\delta}(f) g_n \Big ) \Big |,
\end{align*}
where $G_{R_n}^{\delta}=M_{R_n}^{\delta+1}-M_{R_n}^{\delta}$.
In the following, we will need a fundamental inequality
(see e.g. \cite{CXY13})
\begin{equation}\label{2017}
|\varphi (ab)|^2 \le \varphi(|a| b ) \varphi (| a^*| b),\ \forall\ a,b\in\mathcal N\ \mbox{with}\ b\geq0.
\end{equation}
Then by above inequality (\ref{2017}) and Cauchy-Schwartz inequality, we see that
\begin{align*}
	\Big | \varphi \Big ( \sum_n G^{\delta}_{R_n} (f) g_n \Big )\Big |^2
	&\leq\Big (  \sum_n\big | \varphi\big( G^{\delta}_{R_n} (f) g_n \big )\big |\Big )^2\\
	&\leq \Big (  \sum_n \varphi\big( |G^{\delta}_{R_n} (f)| g_n \big )^{1/2}\varphi\big( |G^{\delta}_{R_n} (f)^*| g_n \big )^{1/2}\Big )^2\\
	&\leq \varphi\Big ( \sum_n |G^{\delta}_{R_n} (f)| g_n \Big ) \varphi \Big (\sum_n |G^{\delta}_{R_n} (f)^*| g_n \Big).
\end{align*}
Noticing that by the definition of $M_{R_n}^{\delta}$, (\ref{CS}) and the operator monotonicity of $0\leq x\mapsto x^t$ for $0<t<1$, we deduce that
\begin{align*}
	|G^{\delta}_{R_n} (f) |
	&=\Big|\frac{1}{R_n}\int^{R_n}_0[S_{r}^{\delta+1}(f)-S_{r}^{\delta}(f)]dr\Big|\le \Big ( \int^{R_n}_0 \big| S_r^{\delta+1}(f)-S_r^{\delta}(f)  \big|^2 \frac{d r}{R_n}\Big )^{1/2}\\
	&\leq \Big ( \int^{\infty}_0 \big | S_r^{\delta+1} (f) - S_r^{\delta}(f) \big |^2 \frac{ d r}{r}\Big )^{1/2}=:G^{\delta} (f).
\end{align*}
It then follows from Cauchy-Schwarz's inequality that
$$\varphi\Big ( \sum_n |G^{\delta}_{R_n} (f)| g_n \Big )\leq \varphi\Big (G^{\delta} (f)\sum_ng_n\Big)\leq\|G^{\delta} (f)\|_2.$$

On the other hand, the same argument gives
$$\varphi \Big (\sum_n |G^{\delta}_{R_n} (f)^*| g_n \Big)\leq\|G^{\delta}_* (f)\|_2, $$
where $G^{\delta}_*$ is defined by
$$G^{\delta}_* (f)=\Big( \int^{\infty}_0 \big |\big(S_r^{\delta+1}(f) - S_r ^{\delta}(f)\big)^*\big |^2\frac{ d r}{r}\Big )^{1/2}.$$

Combining above observations and duality, we get
$$\big \| {\sup\limits_{R\in J}}^+ M_R^{\delta}(f) \big \|_2\leq\big \| {\sup\limits_{R\in J}}^+ M_R^{\delta+1}(f) \big \|_2+\| G^{\delta} (f)\|^{1/2}_2 \|G^{\delta}_\ast (f)\|^{1/2}_2. $$

In the following, we claim that
$$\max\big\{\| G^{\delta} (f) \|_2,\; \| G^{\delta}_*(f) \|_2 \big\}\lesssim \| f \|_2.$$
Consider the term $\| G^{\delta} (f) \|_2$ firstly. Observe that Parseval's identity and Fubini's theorem imply
\begin{align*}
	\| G^{\delta} (f) \|_2^2
	& =\int^{\infty}_0  \varphi \big(\big|S_r^{\delta+1}(f)  - S_r^{\delta}(f) \big |^2\big) \frac{ d r}{r}\\
	& = \int^{\infty}_0 \sum_{N \le r} \Big | \Big ( 1 - \frac{N}{r} \Big )^{\delta +1} -
	\Big ( 1 - \frac{N}{r} \Big )^{\delta} \Big |^2\|\widehat{f}(n)\|_2^2\frac{d r}{r}\\
	& = \sum_{n \not=0}\|\widehat{f}(n)\|_2^2 \int^{\infty}_{N} \frac{N^{2}}{r^{2}}\Big ( 1 - \frac{N}{r} \Big )^{2\delta} \frac{d r}{ r},
\end{align*}
where $N=2n+1$. Note that
$$\int^{\infty}_{N} \frac{N^{2}}{r^{2}}\Big ( 1 - \frac{N}{r} \Big )^{2\delta} \frac{d r}{ r}=\int_{1}^{\infty}r^{-3}(1-r^{-1})^{2\delta}dr<\infty,$$
since $\delta>-1/2$. Hence,
$\| G^{\delta} f \|_2\lesssim \|f\|_2.$
The same argument works for the other term $\|G^{\delta}_{\ast}f\|_{2}$ by noting  $\|G^{\delta}_{\ast}f\|_{2}=\|G^{\delta}f^*\|_{2}$.
This is precisely the claim. 
Consequently, 
$$\big \| {\sup\limits_{R\in J}}^+ M_R^{\delta}(f) \big \|_2\leq\big \| {\sup\limits_{R\in J}}^+ M_R^{\delta+1}(f) \big \|_2+\|f\|_2\lesssim\|f\|_2,$$
where in the last inequality we used (\ref{2012}).

Now we deal with the general case $\alpha>0$. In this case, choose
$\beta > 1/2$ and $\delta > -1/2$ such that $\alpha = \beta + \delta.$ Note that Lemma \ref{thu} and a change  of variable give
$$S_{R}^{\beta+\delta}=C_{\beta,\delta}R^{-(\beta+\delta)}\int^{R}_0 (R - t)^{\beta -1}t^\delta S_t^{\delta}dt.$$
Therefore, we obtain
\begin{align*}
S_{R_n}^{\beta+\delta}  - \frac{C_{\beta,\delta}}{C_{\beta, \delta +1}} S_{R_n}^{\beta+\delta+1}
	&=C_{\beta,\delta}R_n^{-(\beta+\delta)}\int^{R_n}_0 (R_n - t)^{\beta -1}t^\delta
	\big(S_t^{\delta}-S_t^{\delta+1} \big)d t\\
	&\ \ \ + C_{\beta,\delta}R_n^{-(\beta+\delta)}\int^{R_n}_0 (R_n - t)^{\beta -1}t^\delta
	(1-R_n^{-1}t)S_t^{\delta+1}d t\\
	&=:I_{R_n}+II_{R_n}.
\end{align*}

By (\ref{CS}) and the operator monotonicity of $0\leq x\mapsto x^t$ for $0<t<1$, we have
\begin{align*}
	|I_{R_n} (f) |
	&\leq|C_{\beta,\delta}| R_n^{1/2}R_n^{-(\beta+\delta)}\Big(\int^{R_n}_0 |(R_n - t)^{\beta -1}t^\delta|^2
	 d t \Big )^{1/2}\\
	&\ \ \ \times  \Big ( \int^{R_n}_0 \big| S_t^{\delta}(f)-S_t^{\delta+1}(f)  \big|^2 \frac{d t}{R_n}\Big )^{1/2}\lesssim G^{\delta} (f),
\end{align*}
since  $\beta > 1/2$ and $\delta > -1/2$, the integral
$$R_n^{1-2(\beta+\delta)}\int^{R_n}_0 |(R_n - t)^{\beta -1}t^\delta|^2dt=\int_0^1(1-t)^{2\beta-2}t^{2\delta}dt<\infty.$$
Similarly, $|I_{R_n} (f)^*|\lesssim G^{\delta}_* (f)$. Hence, by (\ref{2017}) and duality, we have 
$$\big\| {\sup_{R\in J}}^+ I_R (f)\big\|_2\lesssim \| G^{\delta} (f)\|^{1/2}_2 \|G^{\delta}_\ast (f)\|^{1/2}_2\lesssim\|f\|_2. $$

To estimate $II_{R_n}$, using the same argument as in the case $\alpha>1/2$, we obtain
$$II_{R_n}=C_{\beta,\delta}\int_{0}^{1}\rho(t)M^\delta_{R_nt}dt,$$
where $\rho(t)=\beta(1-t)t^{\delta+1}-\delta(1-t)^\beta t^\delta.$ 
Note that $\beta > 1/2$ and $\delta > -1/2$, $\int_{0}^{1}|\rho(t)|dt<\infty$. Then $II_{R_n}$ can be dealt with as in the case $\alpha>1/2$. Therefore, we  conclude that
$\big\| {\sup_{R\in J}}^+ I_R (f)\big\|_2\lesssim\|f\|_2. $

Finally, combing all above observations, we get
$$
\big \| {\sup_{R\in J}}^{+}S_R^{\alpha}(f)\big\|_{2}
\leq \frac{|C_{\beta,\delta}|}{|C_{\beta, \delta +1}|} \big\|{\sup_{R\in J}}^{+}S_R^{\alpha+1}(f)\Big\|_{2} + \big\|{\sup_{R\in J}}^{+}I_R(f)\Big\|_{2}+\big\|{\sup_{R\in J}}^{+}II_R(f)\Big\|_{2} \lesssim \|f\|_{2},
$$
where in the last inequality we used Theorem \ref{t6} since $\alpha+1>1$. This completes the argument of {\it Step 2}.

\medskip\noindent{\it Step 3}. 
The
general case can be obtained by
Stein's complex interpolation. To see this, assume that $\| f
\|_{p} \leq1$ and let $g =
(g_n)$ be a finite sequence in $L_{p'} (\mathcal N)$  with $\| g \|_{L_{p'} (\mathcal N;\ell_1)} \leq1.$ We first consider the case $1<p<2$. For any fixed $\alpha >
1/3(1/p - 1/2)$, we can find $p_1>1$, $\alpha_0>0$ and $\alpha_1 >1/6$ such that
$$\alpha = (1-t) \alpha_0 + t\alpha_1\quad \text{and}\quad
\frac{1}{p} = \frac{1-t}{2} +\frac{t}{p_1}$$
for some $0<t<1.$ Define
$$ h(z) = u|f|^{\frac{p(1-z)}{2}+ \frac{p z}{p_1}}\quad z \in \mathbb{C},$$
where $f= u |f|$ is the polar decomposition of $f$.  On the other hand, by \cite[Proposition 2.5]{JX}, there exists a function $m =(m_n)_n$ continuous on the
strip $\{ z \in \mathbb{C}:\; 0 \le \mathrm{Re} (z) \le 1 \}$ and
analytic in the interior so that $m(t) = g$ and
 \begin{equation}\label{2023}
  \sup_{s \in \mathbb{R}} \max \big \{ \big \| m( \mathrm{i} s ) \big \|_{L_2(\mathcal N;\ell_1)},
 \big \| m( 1 + \mathrm{i} s ) \big\|_{L_{p_1'} (\mathcal N; \ell_1)} \big \} \leq1.
 \end{equation}

Fix a sequence $(R_n)\subset J$ and $\delta>0$. We define
 $$F(z)= \exp\big(\delta(z^2 - t^2)\big )\sum_n \varphi \big (S_{R_n}^{(1-z) \alpha_0 + z \alpha_1} [h(z)]m_n (z) \big ). $$
Then $F$ is a function analytic in the open
strip $\{z \in \mathbb{C}\;:\; 0 < \mathrm{Re} (z) < 1 \}.$ By
(\ref{2013}), for any $s\in\R$, we deduce that
\begin{align*}
 |F(\mathrm{i} s ) |
 &\le\exp\big ( -\delta( s^2 +t^2)\big) \big \| \big ( S_{R_n}^{\gamma_{1}}
 (h(\mathrm{i} s)) \big )_n \big \|_{L_2(\mathcal N; \ell_{\infty})}\big \| m (\mathrm{i} s) \big\|_{L_2 (\mathcal N;\ell_1)}\\
 &\lesssim \| h(\mathrm{i} s) \|_2\lesssim1,
\end{align*}
where $\gamma_{1}=\alpha_0 + \mathrm{i} s (\alpha_1-\alpha_0)$.
Similarly, by (\ref{2012}), we obtain
\begin{align*}
 |F(1+\mathrm{i} s ) |
 &\le\exp\big ( -\delta( s^2 +t^2-1)\big) \big \| \big ( S_{R_n}^{\gamma_{2}}
 (h(1+\mathrm{i} s)) \big )_n \big \|_{L_{p_{1}}(\mathcal N; \ell_{\infty})}\big \| m (\mathrm{i} s) \big\|_{L_{p^{\prime}_{1}} (\mathcal N;\ell_1)}\\
 &\lesssim \| h(1+\mathrm{i} s) \|_2\lesssim1,
\end{align*}
where $\gamma_{2}=\alpha_1 + \mathrm{i} s (\alpha_1-\alpha_0)$. Therefore, the maximum principle implies $| F(t)|  \lesssim 1$, that is for $f$ satisfying $\|f \|_{L_p(\mathcal N)} \leq1$
$$ \big|\varphi \big ( \sum_n S_{R_n}^{\alpha} (f) m_n \big ) \big | \lesssim 1.$$
By duality  and homogeneity, we then get
$$ \big\|{\sup_{R\in J}}^+ S_R ^{\alpha}(f) \big\|_p\lesssim \|f\|_p, \quad \forall\; f \in  L_p (\mathcal N).$$

The argument for the case $p>2$ is similar once we start by setting $p_1 =\infty.$ Thus we finish the proof.
\end{proof}
\begin{rk}\rm
We have given a slightly more general result by allowing $\alpha$ to be complex. In other words, Theorem \ref{local} remains true under the condition that $\mathrm{Re}(\alpha)>1/3\big|1/2-1/p\big|$ with $\alpha\in\C$ and $1< p<\infty$.
\end{rk}
In the following, we study the maximal inequalities for Bochner-Riesz means in higher dimension ($d\geq2$).
\begin{thm}\label{t111}\rm
Let $d\geq2$ and $\alpha>\frac{d-1}{2}$. Then for any $2<p<\infty$ and $f\in L_{p}(\mathcal{N})$, we have
\begin{enumerate}

\item [(i)] $\|{\sup\limits_{R>0}}^+S_{R}^{\alpha}f\|_{p}\leq C_{p}\| f\|_{p}$.

\item [(ii)] $S_{R}^{\alpha}f\xrightarrow{\rm b.a.u}f\  \mbox{as}\  R\rightarrow\infty$.
\end{enumerate}
\end{thm}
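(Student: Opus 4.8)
The plan is to reduce the higher-dimensional maximal inequality to the one-dimensional machinery already developed, by exploiting the order-lowering formula (\ref{994}) together with the kernel estimate in Lemma \ref{15}. As in the proof of Theorem \ref{local}, the key observation is that part (ii) --- almost-uniform convergence --- follows from part (i) via Proposition \ref{prop:Phix-x in Lp M C0} and Theorem \ref{tc}(ii) (which gives both $L_p$-density convergence and the requisite weak type $(p,p)$ for $p\ge 2$). So the whole argument reduces to establishing the strong type $(p,p)$ maximal bound in (i) for $2<p<\infty$ and $\alpha>\frac{d-1}{2}$. By the usual decomposition $f=f_1-f_2+i(f_3-f_4)$ we may assume $f\ge 0$.

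First I would prove a pointwise domination of $S_R^\alpha f$ by a radial convolution maximal operator. Write $\alpha=\alpha'+\epsilon$ with $\frac{d-1}{2}<\alpha'<\alpha$ and $\epsilon>0$; the case $p$ near $2$ needs more care, but for $\alpha$ strictly above $\frac{d-1}{2}$ Lemma \ref{15} with $p=1$ (letting $r\to 0$) shows the kernels $S_R^\alpha(x,y)$ are uniformly integrable, and in fact the refined estimate with $r>0$ gives
\begin{equation*}
\Big(\int_{|x-y|\ge r}|S_R^\alpha(x,y)|\,dy\Big)\lesssim R^{d/2}(1+R^{1/2}r)^{-\alpha-1/2+d/2}.
\end{equation*}
Since $\alpha>\frac{d-1}{2}$ forces the exponent $-\alpha-1/2+d/2<0$, one extracts from this a radial, radially decreasing, integrable majorant $E$ on $\R^d$ with $E_R(x)=R^{d/2}E(R^{1/2}x)$ such that, after also accounting for the reflected variable as in (\ref{5})--(\ref{6}),
\begin{equation*}
-C\{E_R f(x)+E_R\widetilde f(x)\}\le S_R^\alpha f(x)\le C\{E_R f(x)+E_R\widetilde f(x)\},
\end{equation*}
where $\widetilde f(x)=f(-x)$ and $E_R f=f\ast E_R$. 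This is the $d$-dimensional analogue of (\ref{346}). Then Remark \ref{rk:MaxFunct}, Minkowski's inequality and Lemma \ref{result of cxy}(ii) yield
\begin{equation*}
\big\|{\sup_{R>0}}^{+}S_R^\alpha f\big\|_p\lesssim \big\|{\sup_{R>0}}^{+}E_R f\big\|_p+\big\|{\sup_{R>0}}^{+}E_R\widetilde f\big\|_p\lesssim\|f\|_p
\end{equation*}
for all $1<p\le\infty$, and in particular for $2<p<\infty$; reflection invariance of the trace handles the $\widetilde f$ term.

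The delicate point is manufacturing the radial decreasing majorant $E$ from the averaged bound of Lemma \ref{15}: the lemma controls $\int_{|x-y|\ge r}|S_R^\alpha(x,y)|\,dy$, not $|S_R^\alpha(x,y)|$ pointwise, so one must argue as in \cite{T2,T3} --- decompose the $y$-region into dyadic annuli $2^j r\le |x-y|<2^{j+1}r$ centered at $x$, use the $L^1$-annulus estimate on each, and repackage the resulting geometric series as the tail of a single radial decreasing kernel. This is exactly the place where the hypothesis $\alpha>\frac{d-1}{2}$ (rather than the naive $\frac{3d-2}{6}$, cf.\ Remark \ref{thd}) is used to make the annular sum converge. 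Once this majorant is in hand, the rest is a direct transcription of the proof of Theorem \ref{t6}(ii)--(iii) with $R^{1/2}$ replaced by $R^{1/2}$ in the isotropic $d$-dimensional scaling and Lemma \ref{result of cxy} applied verbatim; and since Theorem \ref{tc}(ii) already supplies $L_p$-convergence of $S_R^\alpha f$ on the dense set $C_c^\infty(\R^d)\otimes\mathcal S_\M$ together with the weak type needed for Proposition \ref{prop:Phix-x in Lp M C0}, part (ii) follows immediately. I expect the main obstacle to be precisely the passage from the integral kernel estimate of Lemma \ref{15} to a pointwise radial-decreasing domination suitable for invoking the Hardy--Littlewood-type maximal inequality of Lemma \ref{result of cxy}.
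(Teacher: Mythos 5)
Your reduction of part (ii) to part (i) matches the paper, but the route you take to part (i) has a genuine gap at exactly the point you flag as ``delicate,'' and it cannot be repaired in the form you propose. Lemma \ref{15} is an \emph{averaged} estimate: it controls $\int_{|x-y|\ge r}|S_R^\alpha(x,y)|^p\,dy$, and no amount of dyadic-annulus bookkeeping converts an $L^1$-average bound on annuli into a \emph{pointwise} radial decreasing majorant $|S_R^\alpha(x,y)|\le C\{E_R(x-y)+E_R(x+y)\}$. Indeed, for $d\ge 2$ no pointwise kernel bound of the type of Lemma \ref{14} is available near the critical order $\alpha>\frac{d-1}{2}$ (that is precisely the content of Remark \ref{thd}); if your majorant existed, Lemma \ref{result of cxy} would give the maximal inequality for all $1<p\le\infty$ and even a weak type $(1,1)$ bound, which is strictly stronger than what the theorem asserts and is not claimed anywhere in the paper. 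The annulus estimate with $p=1$ only lets you dominate $\int_{\text{annulus}}S_R^\alpha(x,y)f(y)\,dy$ by $\|S_R^\alpha(x,\cdot)\|_{L^1(\text{annulus})}$ times the \emph{supremum} of $f$ on the annulus, which is useless for an $L_p$ bound.

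The paper's actual proof resolves this differently, and the resolution explains the restriction $2<p<\infty$. One decomposes $f=\sum_k f_k$ with $f_k$ supported on the annulus $2^k\le|x-y|\le 2^{k+1}$, applies the operator Cauchy--Schwarz inequality (\ref{CS}) to each piece, and invokes Lemma \ref{15} with $p=2$ to get
$$\pm S_R^\alpha f_k(x)\lesssim R^{d/4}(1+R^{1/2}2^k)^{-\alpha-1/2}\,2^{kd/2}\Big(2^{-(k+1)d}\int_{|x-y|\le 2^{k+1}}f(y)^2\,dy\Big)^{1/2}.$$
The geometric factor sums to a bounded function of $R$ precisely when $\alpha>\frac{d-1}{2}$, and the remaining averages of $f^2$ are dominated by a single operator $F$ obtained from Mei's noncommutative Hardy--Littlewood maximal inequality applied to $f^2\in L_{p/2}(\mathcal N)$ --- this is where $p>2$ is used, since one needs $p/2>1$. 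The conclusion $-F^{1/2}\lesssim S_R^\alpha f\lesssim F^{1/2}$ then gives (i) via Remark \ref{rk:MaxFunct}. So the correct ``repackaging'' is not into a convolution majorant but into the square-function/maximal-function scheme above; you should replace the middle portion of your argument accordingly.
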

\begin{proof}
As in proving Theorem \ref{local}, it is enough to show conclusion (i). The idea comes from \cite{T3}. 
Let $f\in L_{p}(\mathcal{N})$. 
We may assume $f$ is positive. Fix $x\in\R^d$, set $f_{k}(y)=f(y)$ whenever $2^{k}\leq |x-y|\leq 2^{k+1}$ and $f_{k}(y)=0$ otherwise. Then
\begin{equation}\label{6003}
S_{R}^{\alpha}f(x)= \sum_{k=-\infty}^{\infty}S_{R}^{\alpha}f_{k}(x).
\end{equation}

By (\ref{CS}), we get
$$
S_{R}^{\alpha}f_{k}(x)
\leq\Big(\int_{|x-y|\geq 2^{k}}|S_{R}^{\alpha}(x,y)|^{2}dy\Big)^{1/2}\Big(\int_{2^{k}\leq|x-y|\leq2^{k+1} }f(y)^{2}dy\Big)^{1/2}.
 $$
Exploiting Lemma \ref{15} into the first term above and (\ref{6003}), we arrive at
\begin{equation*}
S_{R}^{\alpha}f(x)\lesssim \sum_{k=-\infty}^{\infty}R^{d/4}(1+R^{1/2}2^{k})^{-\alpha-1/2}2^{\frac{kd}{2}}
\Big(2^{-(k+1)d}\int_{|x-y|\leq 2^{k+1}}f(y)^{2}dy\Big)^{1/2}.
\end{equation*}
From \cite[Theorem 4.2]{T3}, we have the following estimate:
\begin{equation}\label{8765}
	G(R):=\sum_{k=-\infty}^{\infty}R^{d/4}2^{kd/2}
	(1+R^{1/2}2^{k})^{-\alpha-1/2}\lesssim1.
\end{equation}

On the other hand, since $p>2$, by applying Mei's noncommutative Hardy-Littlewood maximal type $(p,p)$ inequality \cite{M} to $f^{2}\in L_{p/2}(\mathcal{N})$, there exists a positive operator $F\in L_{p/2}(\mathcal{N})$ such that
$$\|F\|_{p/2}\le \|f^{2}\|_{p/2}\quad\mbox{and}\quad
M_{B}(f^{2})\le F,\; \forall\; \textrm{ball } B \textrm{ centered at } x,$$
where $M_{B} (f)(x) = \frac{1}{|B|} \int_{B} f(y) dy$.
As a consequence, by the monotone increasing property of $0\leq x\rightarrow x^{t}$ with $0<t<1$, we infer that for any $k\in\Z$,
$$\Big(2^{-(k+1)d}\int_{|x-y|\leq 2^{k+1}}f(y)^{2}dy\Big)^{1/2}\leq F(x)^{1/2}\ \mbox{and}\ \|F^{1/2}\|_{p}=\|F\|^{1/2}_{p/2}\lesssim\|f\|_{p}.$$

Finally, combining with  (\ref{8765}),  we find
\begin{equation*}
	-F(x)^{1/2}\lesssim-G(R)F(x)^{1/2}\lesssim S_{R}^{\alpha}f(x)\lesssim G(R)F(x)^{1/2}\lesssim F(x)^{1/2}.
\end{equation*}
which implies, by Remark \ref{rk:MaxFunct}, that
$$\| \sup_{R>0}\!^{+}S_{R}^{\alpha}f\|_{p}\lesssim\|F^{1/2}\|_{p}\lesssim\|f\|_{p}.$$
This completes the proof.
\end{proof}

As the same argument in the case $d=1$, we give a slightly more general result by allowing $\alpha$ to be complex for $d\geq2$.
We omit the proof here.
\begin{thm}\label{local1}\rm
Let $2< p<\infty$ and  $f\in L_{p}(\mathcal{N})$. . Then for any complex number $\alpha$ with $\mathrm{Re}\alpha>\frac{d-1}{2}$, we have
$$\mbox{(i)}\ \| {\sup\limits_{R>0}}^{+}S_{R}^{\alpha}f\|_{p}\lesssim \| f\|_{p};\ \mbox{(ii)}\ \|S_{R}^{\alpha}f-f\|_{p}\rightarrow0\  \mbox{as}\ R\rightarrow\infty;\ \mbox{(iii)}\ S_{R}^{\alpha}f\xrightarrow{\rm b.a.u}f\  \mbox{as}\  R\rightarrow\infty.$$
\end{thm}

\section{A Marcinkiewicz type multiplier theorem}
In this section, we study a Marcinkiewicz type multiplier theorem for Hermite expansion in noncommutative setting. 
For $f\in C_{c}^{\infty}(\R^d)\otimes \mathcal{S}_{\M}$, we define
\begin{equation}\label{1007}
 T_{\mu}f(x)=\sum_{n=0}^{\infty}\mu(2n+d)P_{n}f(x),
\end{equation}
where $\mu$ is a bounded function defined on the set of nonnegative integers.
Recall that the finite difference operators are defined inductively by
$$\delta\mu(N)=\mu(N+1)-\mu(N)$$
and for $n\geq1$, 
$$\delta^{n+1}\mu(N)=\delta^{n}\mu(N+1)-\delta^{n}\mu(N).$$

The following is our main result in this section.
\begin{thm}\label{555}\rm
Assume that the function $\mu$ satisfies the conditions
$$|\delta^{r}\mu(N)|\leq CN^{-r}$$
for any $r<d/2+1$. Let $T_{\mu}$ be defined as (\ref{1007}). Then for $f\in L_{p}(\mathcal N)$ with $1<p<\infty$,
$$\|T_{\mu}f\|_{p}\leq C_{p,\mu}\|f\|_{p}.$$
\end{thm}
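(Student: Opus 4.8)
The plan is to reduce the Marcinkiewicz multiplier theorem to the noncommutative Littlewood--Paley--Stein theory developed in Section 6, following the classical scheme of Thangavelu \cite{T4} but carried out in the operator-valued category. Since the case $p=2$ is immediate from Parseval's identity \eqref{992} and the boundedness of $\mu$, and since the range $1<p<2$ will follow from $2<p<\infty$ by duality (the adjoint of $T_\mu$ is $T_{\bar\mu}$, whose symbol satisfies the same difference bounds), it suffices to treat $2<p<\infty$. The essential object is the $g$-function associated with the Hermite semigroup; by the Littlewood--Paley--Stein inequalities (Section 6) one has $\|f\|_p \thickapprox \|g_H(f)\|_p$ for $1<p<\infty$ (in the column/row $rc$-sense), so it is enough to bound $g_H(T_\mu f)$ by $\|f\|_p$, and for this in turn it suffices, by Hölder and the $L_{p'}$-boundedness of a suitable dual $g$-function, to establish a pointwise (or $L_\infty$-BMO-type) domination of the kernel of the composite operator $t\partial_t H^t T_\mu$ in terms of the kernel of $t\partial_t H^t$.

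The key technical step is an Abel/partial-summation manipulation: writing $\mu$ in terms of its iterated differences $\delta^r\mu$ and using the semigroup representation, one expresses $T_\mu$ (composed with the Poisson-type operators defining $g_H$) as an average against $\delta^n\mu(N)$ of operators of the form $(I - e^{-t\sqrt H})^n$ or their analogues, exploiting the hypothesis $|\delta^r\mu(N)|\le C N^{-r}$ for $r=0,\dots,n$ with $n>d/2$. Concretely, I would introduce the auxiliary functions built from the Hermite semigroup kernel (this is what the statement of Section 7 alludes to when it says ``more auxiliary functions will be introduced''), prove the requisite kernel estimates for them — decay of the form $|K_t(x,y)| \lesssim t^{-d}(1+|x-y|/t)^{-d-\epsilon}$ together with the mean-zero/Hörmander regularity in $y$ — using the explicit Mehler-type formula for $e^{-tH}$, and then feed these into the noncommutative Calderón--Zygmund machinery and the Hilbert-valued square function estimates from Section 6. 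The exponent condition $n>d/2$ is exactly what makes the relevant $t$-integrals (equivalently, the sums $\sum_k$ over dyadic scales as in \eqref{8765}) converge.

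The main obstacle, as the paper itself flags, is twofold. First, the Hermite semigroup fails the Markovian normalisation $H^t 1 = 1$, so one cannot invoke Stein's general semigroup theory \cite{S2} as a black box; this forces one to treat $g_H$ as a genuine Hilbert-space-valued singular integral operator and to verify the $(L_\infty,\mathrm{BMO})$ and weak-$(1,1)$ endpoint estimates by hand via the noncommutative Calderón--Zygmund theory, which is precisely the content imported from Section 6. Second, the kernel of the Hermite semigroup does not interact well with the Fourier transform, so the Fourier-analytic arguments of \cite{M,XXX16} are unavailable and every estimate must be extracted from the Mehler kernel directly, requiring the ``more delicate, laboured analysis of the kernel'' the introduction warns about. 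Once the Littlewood--Paley--Stein equivalence and these kernel bounds are in hand, the multiplier bound itself follows by a now-routine interpolation between the $L_2$ identity and the square-function estimates, together with the noncommutative Khintchine inequalities in $L_p$; concretely one writes
\[
\|T_\mu f\|_p \;\lesssim\; \|g_H(T_\mu f)\|_p \;\lesssim\; \|f\|_p,
\]
the last step being the domination of the composite kernel described above.
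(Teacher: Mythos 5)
Your high-level architecture agrees with the paper's: reduce to $2<p<\infty$ by duality, invoke the Littlewood--Paley--Stein equivalence $\|F\|_p\thickapprox\|g(F)\|_p$ from Section 6, and then dominate the $g$-function of $T_\mu f$ by a quantity controlled by $\|f\|_p$. But the central step of your argument is either missing or replaced by one that would fail. First, the indispensable auxiliary object is the function
$$g_{*,k}^{c}(f)(x)=\Big(\int_{\R^{d}}\int_{0}^{\infty}t^{\frac{2-d}{2}}(1+t^{-1}|x-y|^{2})^{-k}\,|\partial_{t}H^{t}f(y)|^{2}\,dt\,dy\Big)^{1/2},$$
whose $L_p(\mathcal N)$-boundedness for $2<p<\infty$ and $k>d/2$ (Lemma \ref{t11}, proved by dualizing against $L_{(p/2)'}$ and applying Mei's noncommutative Hardy--Littlewood maximal inequality) is what actually closes the proof; you never introduce it. The paper's key estimate is the \emph{pointwise} operator inequality $g^{c}_{k+1}(T_\mu f)(x)\lesssim g_{*,k}^{c}(f)(x)$, obtained from the semigroup composition identity $\partial_{t}^{k+1}U(x,t)=\int\partial_{t}^{k}M(t/2,x,y)\,\partial_{t}u(y,t/2)\,dy$ (where $M(t,x,y)$ is the kernel of $H^{t}T_\mu$), a split into $|x-y|\le t^{1/2}$ and $|x-y|>t^{1/2}$, and the operator Cauchy--Schwarz inequality (\ref{CS}); no Calder\'on--Zygmund machinery is applied to the composite operator at all.

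Second, your proposed substitute — pointwise kernel decay $|K_t(x,y)|\lesssim t^{-d}(1+|x-y|/t)^{-d-\epsilon}$ plus H\"ormander regularity, fed into noncommutative CZ theory — is not obtainable from the hypotheses. With only $n>d/2$ finite differences of $\mu$ available, Thangavelu's Lemma \ref{1027} yields the size bound $|\partial_{t}^{k}M(t,x,y)|\lesssim t^{-d/2-k}$ and, crucially, only the \emph{$L^{2}$-weighted} estimate $\int_{\R^d}|x-y|^{2k}|\partial_{t}^{k}M(t,x,y)|^{2}\,dy\lesssim t^{-d/2-k}$, not a pointwise off-diagonal decay of order $d+\epsilon$. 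This is precisely why the classical and noncommutative arguments route through the $g^{*}$-function, which only requires $L^{2}$-averaged kernel information. Your Abel-summation remark points toward how Lemma \ref{1027} is proved in \cite{T4,T5}, but as written your scheme skips the two lemmas that carry the entire weight of the proof.
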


The proof of Theorem \ref{555} depends on the mapping properties of $g$ and $g^{*}$ functions in the following subsection.
\subsection{Littlewood-Paley $g$ function}
In this subsection, we develop a Littlewood-Paley-Stein theory for $g$ function defined by Hermite semigroup $H^{t}=e^{-tH}$ for $t>0$. 
These operators are defined by
\begin{equation}\label{109}
H^{t}f(x)=\sum_{n=0}^{\infty}e^{-Nt}P_{n}f(x)
\end{equation}
where $N=2n+d$ and they have the kernel
$$k_{t}(x,y)=\sum_{\nu} e^{-Nt}\Phi_{\nu}(x)\Phi_{\nu}(y).$$
In view of the Mehler's formula, the kernel $k_{t}$ is explicitly given by (see \cite{T4})
\begin{equation}\label{1024}
k_{t}(x,y)=(\sinh 2t)^{-d/2}e^{\Psi_t(x,y)},
\end{equation}
where
$$\Psi_t(x,y)=-\frac{1}{2}(|x|^{2}+|y|^{2})\coth2t+x\cdot y\frac{1}{\sinh2t}.$$

The Littlewood-Paley $g$ function is defined by
$$g^{c}(f)(x)=\Big(\int_{0}^{\infty}|\partial_{t}H^{t}f(x)|^{2}tdt\Big)^{1/2}.$$

To establish the Littlewood-Paley-Stein theory for Hermite semigroup, there are many difficulties to adapt the argument in \cite{S2,XXX16} to the present case, since the Hermite semigroup fails to satisfy the nice condition $H^{t}1=1$ for any $t>0$. Fortunately, with the help of the explicit form of the kernel $k_{t}(x,y)$, we can use the noncommutative Hilbert-valued Calder\'{o}n-Zygmund theory. To be more precise, for $t>0$, consider $\partial_{t}H^{t}$ as a singular integral operator where its associated kernel, denoted by $\partial_{t}k_{t}(x,y)$, takes values in Hilbert space $L_{2}(\R_{+};tdt)$. As this Hilbert space will appear frequently later, to simply notation, we denote $L_{2}(\R_{+};tdt)$ by $R_{d}$. 
\begin{thm}\label{main}\rm
	Let $1\leq p\leq \infty$. The following assertions hold:
	\begin{enumerate}[\noindent]
		\item\emph{(i)}~for $p=1$ and $f\in L_{1}(\mathcal N)$,
		$$\|\partial_{(\cdot)}H^{(\cdot)}f\|_{L_{1,\infty}(\mathcal N;R_{d}^{rc})}\lesssim\|f\|_{1};$$
		
		\item\emph{(ii)}~for $p=\infty$ and $f\in L_{\infty}(\mathcal N)$,
		$$\|
		\partial_{(\cdot)}H^{(\cdot)}f
		\|_{\mathrm{BMO}_{d}(\mathcal{N};R_{d}^{r})} +
		\|
		\partial_{(\cdot)}H^{(\cdot)}f
		\|_{\mathrm{BMO}_{d}(\mathcal{N};R_{d}^{c})} \lesssim
		\|f\|_\infty;$$
		\item\emph{(iii)}~for $1<p<\infty$ and $f\in L_{p}(\mathcal N)$,
		$$\|\partial_{(\cdot)}H^{(\cdot)}f\|_{L_{p}(\mathcal N;R_{d}^{rc})}\approx\|f\|_{p}.$$
	\end{enumerate}
\end{thm}

In order to use the noncommutative singular integral theory (see e.g. \cite{MP,JP1,HLX}), we need to verify the following two lemmas.
\begin{lem}\label{1052}\rm
	For $f\in L_{2}(\mathcal N)$, we have
	$$\|\partial_{(\cdot)}H^{(\cdot)}f\|_{L_{2}(\mathcal N;R_{d}^{rc})}\approx\|f\|_{2}.$$
\end{lem}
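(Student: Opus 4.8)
The plan is to prove the $L_2$-equivalence $\|\partial_{(\cdot)}H^{(\cdot)}f\|_{L_2(\mathcal N;R_d^{rc})}\approx\|f\|_2$ purely by spectral calculus, exploiting that on $L_2$ the row and column norms coincide (recall $L_2(\mathcal M;\mathcal H^{rc})=L_2(\mathcal M;\mathcal H^r)=L_2(\mathcal M;\mathcal H^c)$ from the preliminaries) and that the Hermite projections $P_n$ form an orthogonal decomposition with $HP_n = N P_n$, $N=2n+d$. First I would expand $f=\sum_n P_nf$ and compute $\partial_t H^t f = \partial_t \sum_n e^{-Nt}P_nf = -\sum_n N e^{-Nt}P_nf$. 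Since the $P_n$ have mutually orthogonal ranges in $L_2(\mathcal N)$ and $\partial_t H^t$ respects this decomposition, the Hilbert-space square function factors through Parseval's identity (\ref{992}): for $u=\partial_{(\cdot)}H^{(\cdot)}f$ regarded as an element of $L_2(\mathcal N;R_d^c)$,
\begin{equation*}
\|u\|_{L_2(\mathcal N;R_d^c)}^2=\varphi\Big(\int_0^\infty |\partial_t H^t f(x)|^2\, t\,dt\Big)=\int_0^\infty \sum_n N^2 e^{-2Nt}\,\|P_nf\|_2^2\, t\,dt,
\end{equation*}
where I have used Fubini (all terms are nonnegative, so this is unconditional) and the orthogonality $\varphi(P_nf(x)^*P_mf(x))=0$ for $n\neq m$ after integrating in $x$.

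Next I would compute the scalar integral $\int_0^\infty N^2 e^{-2Nt} t\, dt$ exactly. Substituting $s=2Nt$ gives $\int_0^\infty N^2 e^{-s}\frac{s}{2N}\frac{ds}{2N}=\frac14\int_0^\infty s e^{-s}\,ds=\frac14$, a constant \emph{independent of $N$} (hence of $n$). Therefore
\begin{equation*}
\|u\|_{L_2(\mathcal N;R_d^c)}^2=\tfrac14\sum_n \|P_nf\|_2^2=\tfrac14\|f\|_2^2
\end{equation*}
by Parseval (\ref{992}). Because $L_2(\mathcal N;R_d^{rc})=L_2(\mathcal N;R_d^c)$ with equal norms, the same identity holds for the $rc$-norm, and we get the clean equivalence (indeed an equality up to the factor $1/2$). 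This is the whole proof; there is no genuine obstacle here since everything is diagonal in the Hermite basis and the noncommutativity of $\mathcal M$ never interacts with the $t$-integration.

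The only points requiring a word of care are: (i) justifying the termwise differentiation $\partial_t\sum_n e^{-Nt}P_nf=-\sum_n N e^{-Nt}P_nf$ in $L_2$, which follows from dominated convergence / uniform convergence on compact subsets of $(0,\infty)$ because $\sum_n N e^{-Nt}\|P_nf\|_2$ converges locally uniformly in $t>0$ (the factor $Ne^{-Nt}$ decays geometrically in $n$ for fixed $t>0$); (ii) the interchange of $\varphi$, the $x$-integral implicit in $\varphi=\int\otimes\tau$, the $t$-integral, and the sum over $n$, all legitimate by Tonelli since the integrand is positive; and (iii) noting that the cross terms vanish, i.e. $\varphi\big((P_nf)(P_mf)^*\big)=0$ for $n\neq m$, which is exactly orthogonality of distinct Hermite projections in $L_2(\mathcal N)$ inherited from $L_2(\R^d)$. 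I would present these as one or two sentences rather than formal lemmas. If one prefers to avoid even the mild convergence bookkeeping, an alternative is to first establish the identity for $f$ in the dense subspace $\big(\bigoplus_n \mathrm{ran}\,P_n\big)\otimes \mathcal S_{\mathcal M}$ of finite Hermite expansions — where every manipulation is a finite sum — and then extend to all of $L_2(\mathcal N)$ by density, using that both sides of (\ref{1051}) are (equivalent to) norms.
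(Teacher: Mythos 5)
Your proof is correct and follows essentially the same route as the paper: expand $\partial_tH^tf=-\sum_n Ne^{-Nt}P_nf$, use Fubini and Parseval's identity (\ref{992}), and evaluate $\int_0^\infty N^2e^{-2Nt}\,t\,dt=\tfrac14$ to get the exact identity $\|\partial_{(\cdot)}H^{(\cdot)}f\|_{L_2(\mathcal N;R_d^{rc})}^2=\tfrac14\|f\|_2^2$. The extra remarks on termwise differentiation and the density argument are fine but not needed beyond what the paper already records.
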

\begin{proof}
The examination of the $L_{2}$ boundedness is easy. Indeed, by the definition of Hermite semigroup, it follows that
$$\|\partial_{(\cdot)}H^{(\cdot)}f\|^{2}_{L_{2}(\mathcal N;R_{d}^{rc})}
=\tau\int_{\R^{d}}\int_{0}^{\infty}|\partial_{t}H^{t}f(x)|^{2}tdtdx.$$

Observe that for $t>0$,
$$\partial_{t}H^{t}f(x)=-\sum_{n=0}^{\infty} e^{-Nt}NP_{n}f(x),$$
where $N=2n+d$. Thus Fubini's theorem and Parseval's identity imply
$$\|\partial_{(\cdot)}H^{(\cdot)}f\|^{2}_{L_{2}(\mathcal N;R_{d}^{rc})}
=\sum_{n=0}^{\infty}\int_{0}^{\infty}e^{-2Nt}N^{2}tdt\|P_{n}f\|_{2}^{2}
=\frac{1}{4}\|f\|_{2}^{2}.$$
This proves the $L_{2}$ equivalence.
\end{proof}

The following estimates of the associated kernel $\partial_{t}k_{t,m}(x,y)$ can be found in \cite{T5} without verification. For the sake of completeness, we will give a sketch of the proof in the Appendix.
\begin{lem}\label{10225}\rm
	There exist two positive constants $C$ and $a$ independent of $x,y$ and $t$ such that
	\begin{enumerate}
		\item [(i)] $|\partial_{t}k_{t}(x,y)|\leq Ct^{-\frac{d}{2}-1}e^{-\frac{a}{t}|x-y|^{2}};$
		
		\item [(ii)] $|\partial_{y_{j}}\partial_{t}k_{t}(x,y)|\leq Ct^{-\frac{d}{2}-\frac{3}{2}}e^{-\frac{a}{t}|x-y|^{2}};$
		
		\item [(iii)] $|\partial_{x_{j}}\partial_{t}k_{t}(x,y)|\leq Ct^{-\frac{d}{2}-\frac{3}{2}}e^{-\frac{a}{t}|x-y|^{2}}.$
		
	\end{enumerate}
	Consequently, we have
	\begin{itemize}
		\item[(iv)] $\|\partial_{(\cdot)}k_{(\cdot)}(x,y)\|_{R_{d}}\leq \frac{C}{|x-y|^{d}};$
		
		\item[(v)] $\|\partial_{y_{j}}\partial_{(\cdot)}k_{(\cdot)}(x,y)\|_{R_{d}}\leq \frac{C}{|x-y|^{d+1}};$
		
		\item[(vi)] $\|\partial_{x_{j}}\partial_{(\cdot)}k_{(\cdot)}(x,y)\|_{R_{d}}\leq \frac{C}{|x-y|^{d+1}}.$
		
	\end{itemize}
\end{lem}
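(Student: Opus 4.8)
The plan is to prove the pointwise kernel bounds (i)--(iii) by differentiating the explicit Mehler formula (\ref{1024}) and then to deduce (iv)--(vi) by integrating against $t\,dt$.

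\medskip\noindent\textbf{Step 1: Pointwise bounds on $\partial_t k_t$.} Write $k_t(x,y)=(\sinh 2t)^{-d/2}e^{\Phi_t(x,y)}$ and compute
\[
\partial_t k_t(x,y)=\Big[-\tfrac{d}{2}\,\frac{2\cosh 2t}{\sinh 2t}+\partial_t\Phi_t(x,y)\Big]k_t(x,y).
\]
A direct calculation gives, with $s=\sinh 2t$, $c=\coth 2t$,
\[
\partial_t\Phi_t(x,y)=\frac{2(|x|^2+|y|^2)}{\sinh^2 2t}-\frac{4\cosh 2t}{\sinh^2 2t}\,x\cdot y
=\frac{2}{\sinh^2 2t}\big(|x-y|^2+(1-\cosh 2t)\,2x\cdot y\big)+\cdots,
\]
so one should reorganize $\Phi_t$ itself into the ``heat-kernel'' form
\[
\Phi_t(x,y)=-\tfrac14|x-y|^2\tanh t-\tfrac14|x+y|^2\coth t
\]
(using $\coth 2t-\tfrac1{\sinh 2t}=\tanh t$ and $\coth 2t+\tfrac1{\sinh 2t}=\coth t$), which makes the Gaussian decay transparent. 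From here I would split into $0<t\le 1$ and $t\ge 1$. For small $t$: $\sinh 2t\approx t$, $\coth t\ge 1$, $\tanh t\approx t$, and the prefactor $[\cdots]$ is $O(t^{-1})$; combining with $e^{\Phi_t}\le e^{-c|x-y|^2/t}$ and absorbing the polynomial loss into a slightly smaller constant $a$ gives $|\partial_t k_t(x,y)|\le C t^{-d/2-1}e^{-a|x-y|^2/t}$. For large $t$ the factor $(\sinh 2t)^{-d/2}$ and $e^{-\frac14|x+y|^2\coth t}$ decay exponentially, and since $t^{-d/2-1}e^{-a|x-y|^2/t}$ is bounded below on $t\ge 1$, $|x-y|$ bounded, the bound holds trivially there too (one may also note $e^{-a|x-y|^2/t}$ is harmless when $t$ is large).

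\medskip\noindent\textbf{Step 2: Spatial derivatives.} For (ii)--(iii), differentiate once more: $\partial_{y_j}\partial_t k_t=\partial_{y_j}\big([\cdots]k_t\big)=\big(\partial_{y_j}[\cdots]\big)k_t+[\cdots]\,(\partial_{y_j}\Phi_t)\,k_t$. Since $\partial_{y_j}\Phi_t=-\tfrac12 y_j\coth t+\tfrac12\frac{x_j}{\sinh 2t}=O\big(\tfrac{|x-y|}{t}\big)+O(|x+y|)$ for small $t$, and the Gaussian $e^{-c|x-y|^2/t}$ absorbs the factor $\tfrac{|x-y|}{t}$ at the cost of $t^{-1/2}$, one picks up exactly one extra power $t^{-1/2}$, yielding $t^{-d/2-3/2}e^{-a|x-y|^2/t}$; the $|x+y|$ terms are controlled by large-$t$ exponential decay exactly as in Step 1. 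The estimate (iii) for $\partial_{x_j}$ is identical by the symmetry $k_t(x,y)=k_t(y,x)$.

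\medskip\noindent\textbf{Step 3: From pointwise bounds to $R_d$-norms.} Recall $R_d=L_2(\R^+;t\,dt)$, so $\|\partial_{(\cdot)}k_{(\cdot)}(x,y)\|_{R_d}^2=\int_0^\infty|\partial_t k_t(x,y)|^2 t\,dt$. Insert (i):
\[
\int_0^\infty t^{-d-2}e^{-2a|x-y|^2/t}\,t\,dt=\int_0^\infty t^{-d-1}e^{-2a|x-y|^2/t}\,dt,
\]
and the substitution $u=|x-y|^2/t$ gives $\int_0^\infty t^{-d-1}e^{-2a|x-y|^2/t}\,dt=|x-y|^{-2d}\int_0^\infty u^{d-1}e^{-2au}\,du=C|x-y|^{-2d}$; taking square roots yields (iv). The estimates (v) and (vi) follow the same way from (ii), (iii), producing $|x-y|^{-2d-2}$ under the square root, hence $|x-y|^{-d-1}$.

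\medskip\noindent\textbf{Main obstacle.} The only genuinely delicate point is the bookkeeping in Step 1--2: one must verify that the algebraic prefactors arising from $\partial_t$ (and $\partial_{y_j}$) are dominated by negative powers of $t$ near $t=0$ while all the ``$|x+y|$'' contributions and the $(\sinh 2t)^{-d/2}$ factor are swallowed by exponential decay for $t$ large, so that a single clean bound of the stated form holds uniformly in $t\in(0,\infty)$ — in particular one must choose the constant $a$ strictly smaller than the natural Gaussian constant $1/4$ in order to absorb every polynomial factor of $|x-y|/t$. Once the reorganized form $\Phi_t(x,y)=-\tfrac14|x-y|^2\tanh t-\tfrac14|x+y|^2\coth t$ is in hand, the rest is routine.
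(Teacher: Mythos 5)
Your overall strategy is the same as the paper's: differentiate the explicit Mehler kernel, dominate the exponential by a Gaussian in $x-y$, split into $0<t\le 1$ and $t\ge 1$, absorb the polynomial prefactors into the exponential at the cost of shrinking the Gaussian constant, and then obtain (iv)--(vi) by the change of variables $u=|x-y|^2/t$ (which you carry out correctly; the paper leaves this step implicit). The one structural difference is that the paper reduces to $d=1$ via the product structure of the kernel and works with $\varphi(t,x,y)=\tfrac12(x-y)^2\coth 2t+xy\tanh t$, handling the cross term by a case split on the sign of $xy$, whereas you diagonalize the quadratic form in terms of $|x-y|^2$ and $|x+y|^2$; your decomposition is arguably cleaner since both quadratic forms are nonnegative and no case split is needed.

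However, there is a concrete error in the linchpin identity of Step 1: you have the hyperbolic functions swapped. Matching coefficients in
$-\tfrac12(|x|^2+|y|^2)\coth 2t+\tfrac{x\cdot y}{\sinh 2t}=-\tfrac14|x-y|^2\,\alpha-\tfrac14|x+y|^2\,\beta$
forces $\alpha=\coth 2t+\tfrac1{\sinh 2t}=\coth t$ and $\beta=\coth 2t-\tfrac1{\sinh 2t}=\tanh t$, i.e.
$\Phi_t(x,y)=-\tfrac14|x-y|^2\coth t-\tfrac14|x+y|^2\tanh t$,
not the form you wrote. This matters: with your version the $|x-y|^2$ term carries the factor $\tanh t\approx t$ for small $t$, which gives no Gaussian decay $e^{-a|x-y|^2/t}$ at all, so the bound $e^{\Phi_t}\le e^{-c|x-y|^2/t}$ that you invoke immediately afterwards would not follow. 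All of your subsequent absorptions (the factor $|x-y|^2/t^2$ in $\partial_t\Phi_t$ costing $t^{-1}$, the factor $|x+y|^2$ being killed by $e^{-\frac14|x+y|^2\tanh t}$ at cost $t^{-1}\approx(\tanh t)^{-1}$, the factor $|x-y|/t$ in $\partial_{y_j}\Phi_t$ costing $t^{-1/2}$) are exactly the ones licensed by the \emph{corrected} identity, so once you swap $\tanh t$ and $\coth t$ back the argument closes; but as written the central inequality is unjustified. Two smaller slips: $\partial_t\Phi_t$ is off by a factor of $2$ in both terms (harmless), and your assertion that the prefactor is ``$O(t^{-1})$'' is only true \emph{after} the absorption of $|x-y|^2/t^2$ and $|x+y|^2$ into the exponential, not pointwise in $x,y$ — you should state it in that order, as the paper does.
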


Now we are ready to prove Theorem \ref{main}.
\begin{proof}[Proof of Theorem \ref{main}]
By Lemma \ref{1052}, Lemma \ref{10225} and noting $R_d$ is separable, conclusion (i) and (ii) are the consequence of standard Hilbert-valued CZ theory (see \cite{MP,C}).

In the following, we prove (iii). Since $R_{d}$ is separable, we may find an orthonormal basis, denote by $(u_{m})_{m\geq1}$. Let
$\partial_{t}k_{t,m}(x,y)=\langle u_{m},\partial_{t}k_{t}(x,y)\rangle,$ where $\langle\ ,\ \rangle$ is the inner product induced by $R_{d}$. Denote by $\partial_{t}H^{t}_{m}$ the CZ operator associated with the kernel $\partial_{t}k_{t,m}(x,y)$.
Then conclusion (i) implies	for
$f\in L_{1}(\mathcal N)$,
$$\inf_{\partial_{(\cdot)}H_m^{(\cdot)}f=g_{m}+h_{m}}\big\{\|(g_{m})\|_{L_{1,\infty}(\mathcal{N}; R_{d}^{c})}+\|(h_{m})\|_{L_{1,\infty}(\mathcal{N}; R_{d}^{r})}\big\}\lesssim\|f\|_{1}.$$	

If we set
$$Tf(x)=\sum_{m}\varepsilon_{m}\partial_{(\cdot)}H_m^{(\cdot)}f(x),$$ where $(\varepsilon_{m})$ is the Rademacher sequence on a probability space $(\Omega,P)$, then
by noncommutative  Khintchine inequality in weak $L_{1}$-space \cite[Corollary 3.2]{C1}, we  obtain 
$$\|Tf\|_{L_{1,\infty}(L_{\infty}(\Omega)\overline{\otimes}\mathcal N)}\lesssim\|f\|_{1}.$$
Now using Lemma \ref{1052} and real interpolation \cite{P2}, we conclude that $T$ is bounded from $L_{p}(\mathcal{N})$ to $L_p(L_{\infty}(\Omega)\overline{\otimes}\mathcal{N})$ for $1<p<2$. Hence, $\partial_{(\cdot)}H^{(\cdot)}$ is bounded from $L_{p}(\mathcal{N})$ to $L_p(\mathcal{N};
R_{d}^{rc})$ for $1<p<2$.
according to the noncommutative  Khintchine inequality in $L_p$-spaces \cite{LG}.

On the other hand, if we set $T_{c}f=\sum_{m=1}^\infty \partial_{(\cdot)}H^{(\cdot)}_{m} \hskip-1pt f \otimes e_{m1}$ and $T_{r}f=\sum_{m=1}^\infty \partial_{(\cdot)}H^{(\cdot)}_{m} \hskip-1pt f \otimes e_{1m}$, then by Lemma \ref{1052} and conclusion (ii), 
$T_{c}$ and $T_{r}$ are bounded from $L_p(\mathcal N)$ to $L_p(\mathcal{N}\bar\otimes\mathcal B({\ell_{2}}))$ thanks to Masut's interpolation \cite{Mu} for $2\leq p<\infty$. Therefore, $\partial_{(\cdot)}H^{(\cdot)}$ is bounded from $L_p(\mathcal N)$ to $L_p(\mathcal{N};
R_{d}^{rc})$ for all $1< p<\infty$.

Finally, the inverse inequality can be obtained by a routine approach. Indeed, by applying the polarization identity, Lemma \ref{1052} and H\"{o}lder's inequality, we arrive at
$$\|f\|_{p}=\sup_{\|g\|_{p^{\prime}}\leq1}\langle \partial_{(\cdot)}H^{(\cdot)}f,\partial_{(\cdot)}H^{(\cdot)}g\rangle\lesssim\|\partial_{(\cdot)}H^{(\cdot)}f\|_{L_{p}(\mathcal N;R_{d}^{rc})},$$
which finishes the proof.
\end{proof}


Before proving Theorem \ref{555}, we have to introduce some more auxiliary functions. The $g_{k}$ functions are defined by $g^{c}_{1}=g^{c}$ and for $k>1$, 
$$g_{k}^{c}(f)(x)=\Big(\int_{0}^{\infty}t^{2k-1}|
\partial^{k}_{t}H^{t}f(x)|^{2}dt\Big)^{\frac{1}{2}}.$$

By applying (\ref{CS}) and the argument in presented in \cite[Page 7-8]{T5}, we can conclude that for any $k\geq1$,
\begin{equation}\label{1020}
	g^{c}_{k}(f)(x)\leq C_{k} g^{c}_{k+1}(f)(x).
\end{equation}

Another family of functions we need are the $g^{c}_{*,k}$ ($k\geq1$) functions defined by
$$g_{*,k}^{c}(f)(x)=\Big(\int_{\R^{d}}\int_{0}^{\infty}t^{\frac{2-d}{2}}
(1+t^{-1}|x-y|^{2})^{-k}|\partial_{t}H^{t}f(y)|^{2}dtdy\Big)^{\frac{1}{2}}.$$
\begin{lem}\label{t11}\rm
For $f\in L_{p}(\mathcal{N})$ with $2<p<\infty$ and $k>\frac{d}{2}$, we have
$$\|g_{*,k}^{c}(f)\|_{p}\lesssim\|f\|_{p}.$$
\end{lem}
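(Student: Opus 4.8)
The plan is to control $g^c_{*,k}$ by the column square function $g^c$ composed with a Hardy–Littlewood type maximal operator, exactly in the spirit of the classical Fefferman–Stein argument for $g^*_\lambda$, but carried out in the operator-valued setting using the tools already assembled in this section. First I would split the $y$-integral in the definition of $g^c_{*,k}(f)(x)^2$ into the local piece $|x-y|\le t^{1/2}$ and the dyadic annuli $2^{j}t^{1/2}\le |x-y|\le 2^{j+1}t^{1/2}$ for $j\ge 0$. On the annulus labelled $j$ the weight $(1+t^{-1}|x-y|^2)^{-k}$ is of size $\le C\,2^{-2jk}$, so after replacing $(\int_0^\infty t^{(2-d)/2}|\partial_t H^t f(y)|^2\,dt)$ on that region by the integral over $\R^d$ of the same quantity against $t^{-d/2}\mathbf 1_{|x-y|\le 2^{j+1}t^{1/2}}$, one recognizes — after integrating in $t$ against the normalized bump $t^{-d/2}\mathbf 1_{|z|\le 2^{j+1}t^{1/2}}$, which is a radial, radially decreasing $L_1$ function of total mass $\approx 2^{(j+1)d}$ — a convolution of the positive function $x\mapsto \int_0^\infty t\,|\partial_t H^t f(x)|^2\,dt/t = g^c(f)(x)^2$ with such a bump. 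More precisely, I would aim at the pointwise operator inequality
$$
g^c_{*,k}(f)(x)^2 \;\lesssim\; \sum_{j\ge 0} 2^{j(d-2k)}\,\big(\psi_{j}\ast g^c(f)^2\big)(x),
$$
where $\psi_j$ is (a fixed multiple of) the $L^1$-normalized indicator of the ball of radius $2^{j+1}$, so that $\|\psi_j\|_1\lesssim 1$; here I use that $|\partial_t H^t f(y)|^2$ is positive and that the $t$-integral can be pushed inside, together with Tonelli's theorem for the trace.

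The second step is to apply the noncommutative Hardy–Littlewood maximal inequality of Mei (as used already in the proof of Theorem \ref{t111}) to the positive element $g^c(f)^2 \in L_{p/2}(\mathcal N)$, which is legitimate since $p>2$: there is a positive $F\in L_{p/2}(\mathcal N)$ with $\psi_j\ast g^c(f)^2 \le C F$ for every $j$ and $\|F\|_{p/2}\lesssim \|g^c(f)^2\|_{p/2} = \|g^c(f)\|_p^2$. Summing the geometric series $\sum_{j\ge 0} 2^{j(d-2k)}$, which converges precisely because $k>d/2$, yields $g^c_{*,k}(f)(x)^2 \lesssim F(x)$ as operators, hence $\|g^c_{*,k}(f)\|_p = \|g^c_{*,k}(f)^2\|_{p/2}^{1/2} \lesssim \|F\|_{p/2}^{1/2}\lesssim \|g^c(f)\|_p$. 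Finally, invoking Theorem \ref{main} (or directly estimate (\ref{1051})), $\|g^c(f)\|_p = \|f\|_{E_p^c}\lesssim \|f\|_p$ for $2\le p<\infty$, which closes the chain $\|g^c_{*,k}(f)\|_p\lesssim\|f\|_p$.

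The step I expect to be the main obstacle is making the pointwise domination in the first paragraph genuinely rigorous at the operator level: one must verify that the order relations between positive operators are preserved when integrating $|\partial_t H^t f(y)|^2\ge 0$ against scalar weights and then convolving, and that the passage from the weight $(1+t^{-1}|x-y|^2)^{-k}$ to a sum of normalized balls really produces radial, radially decreasing kernels with the claimed $L^1$-bounds (so that Mei's maximal inequality, which in the form quoted after (\ref{9876}) is stated for averages over balls, applies with uniform constant). A minor subtlety is interchanging the $t$-integration, the $y$-integration, and the (semifinite) trace, which is justified by positivity and Tonelli's theorem. Once the convolution inequality is in hand, the remainder is exactly the argument already used for Theorem \ref{t111}: Mei's maximal inequality on $L_{p/2}$ followed by Remark \ref{rk:MaxFunct}-type reasoning and summation of the geometric series.
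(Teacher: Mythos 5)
Your reduction to Theorem \ref{main} plus a Hardy--Littlewood maximal bound is the right general idea, but the central pointwise domination
$$g^c_{*,k}(f)(x)^2 \,\lesssim\, \sum_{j\ge 0}2^{j(d-2k)}\big(\psi_j\ast g^c(f)^2\big)(x)$$
is not just hard to justify --- it is false. In your dyadic decomposition the averages are over balls of the $t$-dependent radius $2^{j+1}t^{1/2}$; when you first integrate in $t$ and then convolve $g^c(f)^2$ against a \emph{fixed} kernel $\psi_j$ (radius $2^{j+1}$, no $t$), the parabolic scaling is destroyed. Since your derivation uses nothing about $\partial_tH^tf$ beyond positivity of $|\cdot|^2$, the inequality would have to hold for an arbitrary positive density; but if $|\partial_tH^tf(y)|^2\,dt\,dy$ is (an approximation of) a point mass of weight $A$ at $(y_0,t_0)$, then $g^c(f)^2\approx At_0\,\delta_{y_0}$ and the right-hand side at $x=y_0$ is $\approx At_0\sum_j2^{-2jk}\approx At_0$, whereas $g^c_{*,k}(f)(y_0)^2\approx A\,t_0^{(2-d)/2}=At_0\cdot t_0^{-d/2}$, larger by the unbounded factor $t_0^{-d/2}$ as $t_0\to0$. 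The natural repair --- keep the radius $2^{j+1}t^{1/2}$ and apply Mei's maximal inequality for each fixed $t$ to $|\partial_tH^tf|^2$ --- also fails: it yields a majorant $F_t$ for each $t$, and $\big\|\int_0^\infty tF_t\,dt\big\|_{p/2}$ is only controlled by $\int_0^\infty t\,\big\||\partial_tH^tf|^2\big\|_{p/2}\,dt$, which the triangle inequality does \emph{not} allow you to bound by $\|g^c(f)\|_p^2=\big\|\int_0^\infty t|\partial_tH^tf|^2\,dt\big\|_{p/2}$.

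The paper's proof avoids this precisely by dualizing, which is the standard route for $g^*_\lambda$-type functions when $p>2$: choose a positive norming element $h\in L_r(\mathcal N)$ with $r=(p/2)'$, write $\|g^c_{*,k}(f)\|_p^2=\tau\int g^c_{*,k}(f)^2h$, and use Fubini to move the weight onto $h$. The inner integral $\int_{\R^d}t^{-d/2}(1+t^{-1}|x-y|^2)^{-k}h(x)\,dx$ is the convolution of $h$ with an integrable, radial, radially decreasing bump at scale $t^{1/2}$ (integrable exactly because $k>d/2$), so Lemma \ref{result of cxy}(ii) together with Remark \ref{rk:MaxFunct} produces a single majorant $a\in L_r(\mathcal N)_+$ uniform in $t$ with $\|a\|_r\lesssim1$; H\"older's inequality and Theorem \ref{main} then finish. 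The essential point you are missing is that the maximal averaging must land on the single dual function $h$, not on the $t$-indexed family $|\partial_tH^tf|^2$. I would rework the proof along these lines.
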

\begin{proof}
Note that $\|g_{*,k}^{c}(f)\|^{2}_{p}=\|(g_{*,k}^{c}(f))^{2}\|_{\frac{p}{2}}$. Denote $r$ the conjugate number of $\frac{p}{2}$, and choose a positive function $h\in L_{r}(\mathcal N)$ with norm one such that
\begin{align*}
 \|g_{*,k}^{c}(f)\|^{2}_{p}
 &=\tau\int_{\R^{d}}g_{*}^{c}(f)(x)^{2}h(x)dx\\
 &= \tau\int_{\R^{d}}\int_{\R^{d}}\int_{0}^{\infty}t^{\frac{2-d}{2}}
(1+t^{-1}|x-y|^{2})^{-k}|\partial_{t}H^{t}f(y)|^{2}h(x)dtdydx\\
&=\tau\int_{\R^{d}}\int_{0}^{\infty}t|\partial_{t}H^{t}f(y)|^{2}\int_{\R^{d}}t^{-\frac{d}{2}}
(1+t^{-1}|x-y|^{2})^{-k}h(x)dxdtdy.
\end{align*}

If we set $\psi(x)=(1+|x|^2)^{-k}$, then clearly $\psi$ is positive and satisfies the conditions stated in Lemma \ref{result of cxy} when $k>\frac{d}{2}$. Hence, Lemma \ref{result of cxy} (ii) gives $(\psi_{t^{1/2}}\ast h)_{t>0}$ is of strong type $(r,r)$, which implies, by 
Remark \ref{rk:MaxFunct}, that there exists a positive operator $a\in L_{r}(\mathcal N)$ such that
$$\int_{\R^{d}}t^{-\frac{d}{2}}
(1+t^{-1}|x-y|^{2})^{-k}h(x)dx\leq a(y),\ \forall\ x\in\R^{d},\ \forall\ t>0\ \ \mbox{and}\ \ \|a\|_{r}\lesssim\|h\|_{r}=1.$$

Finally, by above relation, H\"{o}lder's inequality and Theorem \ref{main}, we get
$$\|g_{*,k}^{c}(f)\|^{2}_{p}\lesssim\tau\int_{\R^{d}}\int_{0}^{\infty}
t|\partial_{t}H^{t}f(y)|^{2}dta(y)dy\lesssim\|f\|^{2}_{p},$$
which finishes the proof.
\end{proof}
\subsection{Proof of Theorem \ref{555}} 
In this subsection, we prove Theorem \ref{555}. 
\begin{proof}[Proof of Theorem \ref{555}]
Let $F(x)=T_{\mu}f(x)$. We claim it is enough to show for all $x\in\R^{d}$ and all $k>d/2$
\begin{equation}\label{7011}
	g^{c}_{k+1}(F)(x)\leq C_{k,\mu}g_{*,k}^{c}(f)(x).
\end{equation} 
Indeed, once we obtain (\ref{7011}), Theorem \ref{555} for $p>2$ follows from Theorem \ref{main} (iii) and Lemma \ref{t11} in view of (\ref{1020}): $g^{c}(F)(x)\leq C_{k}g_{k+1}^{c}(F)(x)$; and Theorem \ref{555} for $1<p<2$ can be obtained by duality. So we get the desired estimate.

To prove (\ref{7011}), we start by defining the following function
$$M(t,x,y)=\sum_{\nu} e^{-(2|\nu|+d)t}\mu(2|\nu|+d)\Phi_{\nu}(x)\Phi_{\nu}(y).$$
If we set $u(x,t)=H^{t}f(x)$ and $U(x,t)=H^{t}F(x)$, then it is easy to verify that
\begin{equation}\label{701221}
	U(x,t_{1}+t_{2})=\int_{\R^{d}}M(t_{1},x,y)u(y,t_{2})dy.
\end{equation}
Differentiating (\ref{701221}) $k$ times with respect to $t_{1}$ and one time with respect to $t_{2}$ and then setting $t_{1}=t_{2}=t/2$, we get 
\begin{equation}\label{7012212}
	\partial_{t}^{k+1}U(x,t)=\int_{\R^{d}}\partial_{t}^{k}M(t/2,x,y)\partial_{t}u(y,t/2)dy.
\end{equation}

According to (\ref{7012212}), we write $\partial_{t}^{k+1}U(x,t)=A_{t}(x)+B_{t}(x)$ with
\begin{equation*}\label{7012}
	A_{t}(x)= \int_{|x-y|\leq t^{1/2}}\partial_{t}^{k}M(t/2,x,y)\partial_{t}u(y,t/2)dy,
\end{equation*}
and
\begin{equation*}\label{7013}
	B_{t}(x)= \int_{|x-y|> t^{1/2}}\partial_{t}^{k}M(t/2,x,y)\partial_{t}u(y,t/2)dy.
\end{equation*}
Now we use the operator convexity inequality of square function $x\mapsto|x|^{2}$ to obatin
$$|\partial_{t}^{k+1}U(x,t)|^{2}\leq2(|A_{t}(x)|^{2}+|B_{t}(x)|^{2}).$$
The following estimate has already been shown in \cite{T4} that
\begin{equation*}
	|A_{t}(x)|^{2}+|B_{t}(x)|^{2}\lesssim t^{-d/2-2k}\int_{\R^{d}}(1+t^{-1}|x-y|^{2})^{-k}|\partial_{t}u(y,t)|^{2}dy.
\end{equation*}
Therefore, we finally deduce that
$$\int_{0}^{\infty}|\partial_{t}^{k+1}U(x,t)|^{2}t^{2k+1}dt\lesssim \int_{\R^{d}}\int_{0}^{\infty}t^{-d/2+1}(1+t^{-1}|x-y|^{2})^{-k}
|\partial_{t}u(y,t)|^{2}dtdy,$$
which establishes (\ref{7011}) via the operator monotonicity  of $x\rightarrow x^{t}$, $x\geq0$ for $0<t<1$.
\end{proof}

\section{Oscillation operator related to Hermite expansion}
In this section, we are going to investigate another multiplier theorem, where the function $\mu$ is defined as 
\begin{equation}\label{1008}
	\mu(n)=(2n+d)^{-\alpha}e^{(2n+d)it}.
\end{equation}

For $f\in C_{c}^{\infty}(\R^d)\otimes \mathcal{S}_{\M}$, we define $T_{t}^{\alpha}$ as
\begin{equation}\label{1009}
T_{t}^{\alpha}f(x)=\sum_{n=0}^{\infty}(2n+d)^{-\alpha}e^{(2n+d)it}
P_{n}f(x).
\end{equation}

Observe that these operators behave like the operator given by convolution associated with the oscillating kernels $|x|^{-\alpha}e^{i\langle x,t\rangle}$. A simple calculation shows that this function $\mu$ defined as (\ref{1008}) does not satisfy the conditions in Theorem \ref{555} unless $\alpha>d$.

The following theorem is our main result in this section.
\begin{thm}\label{C3}\rm
Let $1\leq p<\infty$.
\begin{enumerate}
	\item [(i)] For $p=1$, $\alpha=d/2$ and $t\in [t_{0},\pi/4]$, where $t_{0}$ is an arbitrary positive constant, $T_{t}^{\alpha}$ is bounded from $\mathrm{H}_1(\R^d;\M)$ to $L_{1}(\mathcal N)$. That is,
	$$\|T_{t}^{\alpha}f\|_{1}\leq C_{d,t_0}\|f\|_{\mathrm{H}_1(\R^d;\M)}.$$
	\item [(ii)] For $1<p<\infty$, $\alpha=d|1/p-1/2|$ and $t\in [t_{0},\pi/4]$, the operator $T_{t}^{\alpha}$ are bounded on $L_{p}(\mathcal N)$. That is for $f\in L_{p}(\mathcal N)$,
	$$\|T_{t}^{\alpha}f\|_{p}\leq C_{p,d,t_0}\|f\|_{p}.$$
\end{enumerate}
\end{thm}
\begin{rk}\rm
(i) In the classical case, Theorem \ref{C3} extends the Hardy-Littlewood theorem for the Fourier transform (see for instance \cite{HL,Sa}); (ii) Theorem \ref{C3} is some sharp estimate of Schr\"{o}dinger group for Hermite operator, which is a case study of a forthcoming paper by Fan, Hong and Wang \cite{FHW}.
\end{rk}

In the rest of this section, we prove Theorem \ref{C3} when $d=1$, since there is absolutely no change of the proof for the general case. For convenience, we set the operator $T_{t}^{1/2}=L_{t}$. In this case, the kernel $K_{t}(x,y)$ associated to $L_{t}$ is given by
$$K_{t}(x,y)=\sum_{n=1}^{\infty}(2n+1)^{-1/2}e^{(2n+1)it}\varphi_{n}(x)\varphi_{n}(y).$$
If we set
$$K_{t}^{*}(x,y,\lambda)=\sum_{n=1}^{\infty} e^{(2n+1)(-\lambda+it)}\varphi_{n}(x)\varphi_{n}(y),$$
then we can express the kernel $K_{t}(x,y)$ as
$$K_{t}(x,y)=c\int_{0}^{\infty}\lambda^{-1/2}
K_{t}^{*}(x,y,\lambda)d\lambda,$$
where $c=1/\Gamma(1/2)$.
By \cite{T4},
$$K_{t}^{*}(x,y,\lambda)=c(\sinh2(\lambda-it))^{-1/2}
e^{-A_{t}(x,y,\lambda)}e^{iB_{t}(x,y,\lambda)}$$
where $A_{t}(x,y,\lambda)$ and $B_{t}(x,y,\lambda)$ are given by
\begin{eqnarray*}
	2A_{t}(x,y,\lambda) & =  (\sinh^{2}2\lambda+\sin^{2}2t)^{-1}(\sinh2\lambda)\{\cos2t(x-y)^{2}\\ & +
	(\cosh2\lambda-
	\cos2t)(x^{2}+y^{2})\}.
\end{eqnarray*}
\begin{eqnarray*}
	\ \ \ \ 2B_{t}(x,y,\lambda) & = -(\sinh^{2}2\lambda+\sin^{2}2t)^{-1}(\sinh2t)\{\cosh2\lambda(x-y)^{2}\\ & -
	(\cosh2\lambda-\cos2t)(x^{2}+y^{2})\}.
\end{eqnarray*}

It is also shown in \cite{T4} that the following integral
$$\int_{1}^{\infty}\lambda^{-1/2}K_{t}^{*}(x,y,\lambda)d\lambda$$
defines a nice $L_1$ kernel and hence the operator corresponding to this kernel is bounded on $L_{p}(\mathcal{N})$ for all $1\leq p\leq\infty$. So in the following, we may consider the kernel  given by
\begin{equation}\label{mainkernel}
	K_{t}(x,y)=\int_{0}^{1}\lambda^{-1/2}K_{t}^{*}(x,y,\lambda)d\lambda.
\end{equation}

To prove Theorem \ref{C3}, we need following certain estimates of the kernel. 
For convenience, we write $K_{t}(x,y,\lambda)=\{\sinh2(\lambda-it)\}^{-1/2}e^{-A_{t}(x,y,\lambda)}$. The proof of the following lemma can be found in the Appendix.
\begin{lem}\label{2002}\rm
Let $0<t\leq \pi/4$. Then following estimates hold:
\begin{enumerate}
\item [(i)] $\big|\int_{0}^{1}\lambda^{-1/2}
K_{t}(x,y,\lambda)d\lambda\big|\leq C|x-y|^{-1}$;

\item [(ii)] $\big|\int_{0}^{1}\lambda^{-1/2}\partial_{y}K_{t}(x,y,\lambda)
e^{iB_{t}(x,y,\lambda)}d\lambda\big|\leq C|x-y|^{-2}$;

\item [(iii)] $\big|\int_{0}^{1}\lambda^{-1/2}K_{t}(x,y,\lambda)
\partial_{y}\{e^{iB_{t}(x,y,\lambda)}\}d\lambda\big|\leq C(\sin2t)^{-3/2}$;

\item [(iv)] $\big|\int_{0}^{1}\lambda^{-1/2}\lambda K_{t}(x,y,\lambda)
\partial_{\lambda}\{e^{iB_{t}(x,y,\lambda)}\}d\lambda\big|\leq C|x-y|^{-3}$.
\end{enumerate}
\end{lem}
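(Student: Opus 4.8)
The plan is to treat the four estimates in order of increasing delicacy, all starting from the basic size bounds $|\sinh 2(\lambda-it)|\approx |\lambda-it|\approx\max\{\lambda,\sin 2t\}$ for $0<t\le\pi/4$ and $0<\lambda\le 1$, together with the lower bound $A_t(x,y,\lambda)\gtrsim \frac{\lambda}{\lambda^2+\sin^2 2t}(x-y)^2$ which follows by dropping the positive $(\cosh 2\lambda-\cos 2t)(x^2+y^2)$ term and using $\cos 2t\ge 0$ in that range. Writing $s=\sin 2t$, this gives the pointwise bound $|K_t(x,y,\lambda)|\lesssim (\lambda^2+s^2)^{-1/2}\,e^{-c\lambda(x-y)^2/(\lambda^2+s^2)}$, which is the engine for all four parts. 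For \textbf{(i)}, I would split the $\lambda$-integral at $\lambda = s$: on $0<\lambda\le s$ one has $|\lambda^{-1/2}K_t(x,y,\lambda)|\lesssim \lambda^{-1/2}s^{-1}e^{-c\lambda(x-y)^2/s^2}$, and integrating in $\lambda$ (with the change of variable $u=\lambda(x-y)^2/s^2$, or simply bounding the exponential by $1$ when $s^2\ge (x-y)^2$ and using the exponential decay otherwise) produces a bound $\lesssim |x-y|^{-1}$; on $s\le\lambda\le1$ one has $|\lambda^{-1/2}K_t(x,y,\lambda)|\lesssim \lambda^{-3/2}e^{-c(x-y)^2/\lambda}$, and $\int_0^1\lambda^{-3/2}e^{-c(x-y)^2/\lambda}\,d\lambda\lesssim |x-y|^{-1}$ by scaling $\lambda\mapsto (x-y)^2\lambda$.

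Part \textbf{(ii)} is the same mechanism applied after differentiating $K_t(x,y,\lambda)$ in $y$: the derivative brings down a factor $\partial_y A_t$, which from the formula for $A_t$ is a linear combination of $\frac{\sinh 2\lambda\cos 2t}{\sinh^2 2\lambda+\sin^2 2t}(x-y)$ and $\frac{\sinh 2\lambda(\cosh 2\lambda-\cos 2t)}{\sinh^2 2\lambda+\sin^2 2t}\,y$; the first is $\lesssim \frac{\lambda}{\lambda^2+s^2}|x-y|$ and, against the Gaussian $e^{-cA_t}$, contributes an extra $\frac{\lambda}{\lambda^2+s^2}|x-y|$ which gains exactly one more power of $|x-y|^{-1}$ after the same split (using $|x-y|\,e^{-c\lambda(x-y)^2/(\lambda^2+s^2)}\lesssim (\lambda^2+s^2)^{1/2}\lambda^{-1/2}|x-y|^{-1}\cdot\big(\lambda(x-y)^2/(\lambda^2+s^2)\big)^{1/2}e^{-c'(\cdots)}$); the second, carrying the factor $y$, is absorbed because $y^2\cosh2\lambda\lesssim A_t(x,y,\lambda)\cdot(\text{bounded})$ once one reinstates the term we dropped, so $|y|\,e^{-cA_t}$ is again bounded by a harmless constant times an exponential in $(x-y)^2$. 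So (ii) follows from (i)'s computation with one extra decay factor.

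For \textbf{(iii)} the key point is that $\partial_y e^{iB_t}=i(\partial_y B_t)e^{iB_t}$ and $\partial_y B_t$ carries the prefactor $\sinh 2t=s$: explicitly $\partial_y B_t$ is $\lesssim \frac{s\cosh 2\lambda}{\lambda^2+s^2}|x-y| + \frac{s(\cosh2\lambda-\cos2t)}{\lambda^2+s^2}|y|$. The first term gives, after multiplying by $|K_t|$ and integrating, a bound $\lesssim s\cdot|x-y|^{-1}\cdot(\text{the integral in (i)})$ — but this is \emph{not} what we want; instead we do \emph{not} exploit $(x-y)$-decay at all and just use $|x-y|\,e^{-cA_t}\lesssim (\lambda^2+s^2)^{1/2}\lambda^{-1/2}$ crudely, giving $\int_0^1\lambda^{-1/2}\cdot\frac{s}{(\lambda^2+s^2)}\cdot\frac{(\lambda^2+s^2)^{1/2}}{\lambda^{1/2}}\,d\lambda = s\int_0^1\lambda^{-1}(\lambda^2+s^2)^{-1/2}\,d\lambda$, which diverges at $\lambda=0$; the cure is to keep a fractional power of the Gaussian: $|x-y|e^{-cA_t}\lesssim (\lambda^2+s^2)^{1/2}\lambda^{-1/2}\big(\lambda(x-y)^2/(\lambda^2+s^2)\big)^{1/4}e^{-c'A_t}$ and then integrate, which converts the divergent $\lambda^{-1}$ into $\lambda^{-1/2}$ against an $L^1_\lambda$ weight and leaves the worst behavior at $\lambda\approx s$, producing the claimed $(\sin 2t)^{-3/2}$. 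This interplay — deciding exactly how much Gaussian decay to spend on killing powers of $|x-y|$ versus powers of $\lambda$, so that the residual $\lambda$-integral converges and produces the correct power of $s$ — is the \textbf{main obstacle}, and it recurs (with the opposite choice) in \textbf{(iv)}: there $\partial_\lambda e^{iB_t}=i(\partial_\lambda B_t)e^{iB_t}$ and $\partial_\lambda B_t$ behaves like $\frac{s}{(\lambda^2+s^2)}(x-y)^2$ up to bounded factors, so $\lambda\,|\partial_\lambda B_t|\lesssim \frac{\lambda s}{\lambda^2+s^2}(x-y)^2$; against $|K_t|$ and using $(x-y)^2 e^{-cA_t}\lesssim \frac{\lambda^2+s^2}{\lambda}|x-y|^{-1}\big(\lambda(x-y)^2/(\lambda^2+s^2)\big)^{3/2}e^{-c'A_t}$ one gains $|x-y|^{-3}$ and is left with a convergent $\lambda$-integral bounded by an absolute constant, since $s\le 1$. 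I would present (i) in full detail as the template, then for (ii)–(iv) indicate only which differentiated factor appears, which fractional power of the Gaussian is spent, and that the remaining one-variable integral is elementary; the routine change-of-variables computations I would leave to the reader.
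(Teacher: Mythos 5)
Your treatment of part (i) follows the same route as the paper (the bound $|\sinh 2(\lambda-it)|\approx\max\{\lambda,\sin 2t\}$, a Gaussian upper bound for $e^{-A_t}$, and a split of the $\lambda$-integral at $\lambda\approx\sin2t$ followed by scaling), but the justification of your ``engine'' inequality $A_t(x,y,\lambda)\gtrsim\frac{\lambda}{\lambda^2+\sin^2 2t}(x-y)^2$ is wrong as stated: dropping the positive term $(\cosh 2\lambda-\cos 2t)(x^2+y^2)$ and ``using $\cos 2t\ge 0$'' only yields $A_t\ge \frac{\sinh 2\lambda\,\cos 2t}{2(\sinh^2 2\lambda+\sin^2 2t)}(x-y)^2$, which degenerates as $t\to\pi/4$ --- precisely the regime $t\in[t_0,\pi/4]$ needed for Theorem 5.2. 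This is why the paper treats $\pi/8<t\le\pi/4$ separately, switching to the $(x^2+y^2)$ term via $\cosh2\lambda-\cos2t\ge 1-\cos 2t$. The one-line repair is to keep both terms: $\cos 2t\,(x-y)^2+(1-\cos 2t)(x^2+y^2)\ge\big(\cos 2t+\tfrac{1-\cos2t}{2}\big)(x-y)^2=\tfrac{1+\cos2t}{2}(x-y)^2\ge\tfrac12(x-y)^2$. With that fixed, your (i) is the paper's proof.

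For (ii)--(iv) the paper offers no proof (it cites Thangavelu, Lemma 4.1 of \cite{T4}), and your sketches do not close as written. In (ii), the $y$-component of $\partial_y A_t$, namely $\frac{\sinh 2\lambda(\cosh2\lambda-\cos2t)}{\sinh^22\lambda+\sin^22t}\,y$, cannot simply be ``absorbed by a harmless constant'': absorbing it via $|y|e^{-cA_t}\lesssim\big(\frac{\sinh^22\lambda+\sin^22t}{\sinh2\lambda(\cosh2\lambda-\cos2t)}\big)^{1/2}$ leaves a bounded factor times $|K_t|$, and part (i) then only returns $|x-y|^{-1}$, one full power short of the claimed $|x-y|^{-2}$. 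In (iii), the fractional-power device achieves nothing by itself: after inserting $|x-y|e^{-cA_t}\lesssim(\lambda^2+s^2)^{1/2}\lambda^{-1/2}u^{1/4}e^{-c'A_t}$ and then discarding $u^{1/4}e^{-c'A_t}$ uniformly in $x,y$, you land back on the same divergent integral $s\int_0^1\lambda^{-1}(\lambda^2+s^2)^{-1}d\lambda$; no pointwise majorant of the integrand that is uniform in $x,y$ can work near $\lambda=0$. The correct cure is to keep the exponential and perform the $\lambda$-integration first (e.g.\ substitute $v=\lambda(x-y)^2/(\lambda^2+s^2)$ on $\{\lambda\le s\}$, which in fact yields the better bound $Cs^{-1}$), taking the supremum over $x,y$ only afterwards. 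Finally, in (iv) the asserted bound $\lambda|\partial_\lambda B_t|\lesssim\frac{\lambda s}{\lambda^2+s^2}(x-y)^2$ omits both the contribution of $\partial_\lambda\big((\sinh^22\lambda+\sin^22t)^{-1}\big)$, which produces an extra factor $\sinh2\lambda\cosh2\lambda/(\sinh^22\lambda+\sin^22t)$ that is not $O(\lambda)$ when $\lambda\ge s$, and the $(x^2+y^2)$ terms; the subsequent bookkeeping therefore has to be redone. In short: (i) is right modulo the $t\to\pi/4$ fix, but (ii)--(iv) would need the full Thangavelu-style case analysis rather than the shortcuts indicated.
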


We are now ready to prove Theorem \ref{C3}. The idea comes from \cite[Proposition 4.1]{T4}.
\begin{proof}[Proof of Theorem \ref{C3}]
(i) We first prove the $(\mathrm{H}_{1},L_{1})$ estimate of $L_{t}$. 
The proof is based on the atomic decomposition of $\mathrm{H}_1^c(\R;\M)$ introduced in Section 2.
Moreover, it suffices to show that for any atom $a$
$$\big\|L_{t}a\big\|_{1}
\lesssim1.$$

In what follows, we always assume that the atom $a$ is supported in $Q_{\delta}$, where $Q_{\delta}$ denote the interval of length $2\delta$ with center $c_{Q}$. Let $Q_{2\delta}$ be the interval with center $c_{Q}$ with length $4\delta$. Denote by $Q^{c}_{\delta}$ the complement of $Q_{\delta}$. Set $F(x)=T_{t}a(x)$. Decompose $F$ as a sum of three functions $F=F_{1}+F_{2}+F_{3}$, where $F_{1}=F\chi_{Q_{2\delta}}$, $F_{2}=F\chi_{Q_{2\delta}^{c}\cap Q_{\delta^{-1}}}$ and $F_{3}=F\chi_{Q_{2\delta}^{c}\cap Q_{\delta^{-1}}^{c}}$. 
Then by Minkowski's inequality, we have
$$\big\|L_{t}a\big\|_{1}
\leq\|F_{1}\|_{1}+\|F_{2}\|_{1}+\|F_{3}\|_{1}.$$

We first estimate $F_{1}$. 
It suffices to show that $L_{t}$ bounded on $L_1(\M; L_2^c(\R))$. Indeed, by this conclusion and (\ref{CS}),
we deduce that
\begin{eqnarray*}
\big\| F_{1} \big\|_{1} & = &  \int_{Q_{2\delta}} \tau (|F(x)|) \, dx   \le |Q_{2\delta}|^{1/2} \, \tau \Big[ \Big( \int_{Q_{2\delta}} |L_{t}a(x)|^{2} \, dx \Big)^{1/2} \Big] \\ & \lesssim & |Q_{2\delta}|^{1/2} \, \tau \Big[ \Big( \int_{Q_{\delta}} |a(x)|^2 \, dx \Big)^{1/2} \Big] \, \lesssim \, 1,
\end{eqnarray*}
where the last inequality follows the properties of atoms stated in subsection 2.5. This gives the desired estimate. 

Now we examine the $L_1(\M; L_2^c(\R))$-boundedness of $L_{t}$. Let $f\in L_1(\M; L_2^c(\R))$. Then  by anti-linear duality, 
$$\|L_{t}f\|_{L_1(\M; L_2^c(\R))}\leq\sup_{\|h\|_{L_{\infty}(L_2^c)}\leq1}\|L^{*}_{t}h\|_{L_\infty(\M; L_2^c(\R))}\|f\|_{L_1(\M; L_2^c(\R))}.$$
Note that $L_{t}$ is bounded on $L_{2}(\mathcal{N})$, so is $L^{*}_{t}$. This gives 
\begin{align*}
\|L^{*}_{t}h\|_{L_\infty(\M; L_2^c(\R))}
 &=\Big\|\Big(\int_{\R}|L^{*}_{t}h(x)|^{2}dx\Big)^{1/2}\Big\|_{\M}\\
 &=\sup_{\|u\|_{L_{2}(\M)\leq1}} \Big(\int_{\R}\langle|L^{*}_{t}h(x)|^{2}u,u\rangle_{L_{2}(\M)}dx\Big)^{1/2}\\
 &=\sup_{\|u\|_{L_{2}(\M)\leq1}} \Big(\int_{\R}\|L^{*}_{t}(hu)(x)\|^{2}_{L_{2}(\M)}dx\Big)^{1/2}\\
 &\lesssim\sup_{\|u\|_{L_{2}(\M)\leq1}} \Big(\int_{\R}\|h(x)u\|^{2}_{L_{2}(\M)}dx\Big)^{1/2}\\
 &=\Big\|\Big(\int_{\R}|h(x)|^{2}dx\Big)^{1/2}\Big\|_{\M},
\end{align*}
which finishes the argument.

The second term $F_2$ can be verified by using the argument presented in \cite[Proposition 4.1]{T4},  Lemma \ref{2002} and (\ref{CS}). So we omit the proof.

We then turn to the last term $F_{3}$. Decompose the kernel $K_{t}(x,y)$ as follows
$$K_{t}(x,y)=E_{t}(x,y)+G_{t}(x,y)+J_{t}(x,y),$$
where these three terms above are defined by
\begin{eqnarray*}
	E_{t}(x,y) & = &\int_{0}^{1}\lambda^{-1/2}\{K_{t}(x,y,\lambda)
	-K_{t}(x,c_{Q},\lambda)\}e^{iB_{t}(x,y,\lambda)}d\lambda,\\
	G_{t}(x,y) & = & \int_{0}^{1}\lambda^{-1/2}K_{t}(x,c_{Q},\lambda)
	\{e^{iB_{t}(x,y,\lambda)}-e^{iB_{t}(x,y,0)}\}d\lambda, \\
	J_{t}(x,y) & = & \int_{0}^{1}\lambda^{-1/2}K_{t}(x,c_{Q},\lambda)
	e^{iB_{t}(x,y,0)}d\lambda.
\end{eqnarray*}

Denote by the operators $E_{t}$ associated to $E_{t}(x,y)$,  $G_{t}$ associated to the kernel $G_{t}(x,y)$ and $J_{t}$ associated to the kernel $J_{t}(x,y)$. Then Minkowski's inequality implies that
$$\|F_{3}\|_{1}\leq\|E_{t}a\chi_{Q_{2\delta}^{c}\cap Q_{\delta^{-1}}^{c}}\|_{1}+\|G_{t}a\chi_{Q_{2\delta}^{c}\cap Q_{\delta^{-1}}^{c}}\|_{1}+\|J_{t}a\chi_{Q_{2\delta}^{c}\cap Q_{\delta^{-1}}^{c}}\|_{1}.$$
Hence, it is sufficient to show
\begin{eqnarray}\label{500014}
	\max\big\{\|E_{t}a\chi_{Q_{2\delta}^{c}\cap Q_{\delta^{-1}}^{c}}\|_{1},\|G_{t}a\chi_{Q_{2\delta}^{c}\cap Q_{\delta^{-1}}^{c}}\|_{1},\|J_{t}a\chi_{Q_{2\delta}^{c}\cap Q_{\delta^{-1}}^{c}}\|_{1}\big\}\lesssim1.
\end{eqnarray}

In the following, we just estimate the term that  involves $J_t$, since the other two terms can be done 
by using the argument in  \cite[Proposition 4.1]{T4}.
 
By Fubini's theorem, we rewrite $J_{t}a(x)$ as
$$J_{t}a(x)=F_{t}a(x)\int_{0}^{1}\lambda^{-1/2}
K_{t}(x,c_{Q},\lambda)d\lambda\triangleq F_{t}a(x)g_{t}(x),$$
where the operator $F_{t}$ is defined by
$$F_{t}a(x)=\int_{\R}e^{iB_{t}(x,y,0)}a(y)dy.$$
By Lemma \ref{2002} (i), we know that $|g_{t}(x)|\lesssim|x-c_{Q}|^{-1}$. On the other hand, by the definition of $B_{t}(x,y,0)$, we obtain
$$F_{t}a(x)=p_t(x)\int_{\R}e^{ic_txy}a(y)p_t(y)dy,$$
where $p_t(x)=e^{-i\frac{c_tx^2\cos2t}{2}}$ and $c_t=(\sin^{2}2t)^{-1}(\sinh2t)$. Since $t_0<t<\pi/4$, we have $|c_t|\approx1$ with the constants depend on $t_0$. Therefore, noting $|p_t(\cdot)|=1$ and using 
Plancherel's theorem, we get $F_{t}$ is bounded on $L_{2}(\mathcal{N})$. As a consequence, $F_{t}$ is bounded on $L_1(\M; L_2^c(\R))$ by the same argument as in estimating $F_{1}$. Finally, using (\ref{CS}), we see that
\begin{eqnarray*}
	\big\| J_{t}a\chi_{Q_{2\delta}^{c}\cap Q_{\delta^{-1}}^{c}} \big\|_{1} & \leq &
	\Big(\int_{|x-c_{Q}|\geq\delta^{-1}}|g_{t}(x)|^{2}dx\Big)^{1/2}\tau \Big[ \Big( \int_{Q_{\delta}} |F_{t}a(x)|^2 \, dx \Big)^{1/2} \Big] \\
	& \lesssim &   \Big(\int_{|x-c_{Q}|\geq\delta^{-1}}|x-c_{Q}|^{-2}dx\Big)^{1/2}
	\tau \Big[ \Big( \int_{Q_{\delta}} |a(x)|^2 \, dx \Big)^{1/2} \Big]
	\lesssim1,
\end{eqnarray*}
where the first inequality follows the fact that if $x\in Q_{2\delta}^{c}\cap Q_{\delta^{-1}}^{c}$, then $|x-c_{Q}|\geq\delta^{-1}$.

Combining the estimates obtained so far, we finish the proof of (\ref{500014}), which actually gives the desired $(\mathrm{H}_1,L_{1})$ boundedness of $L_{t}$.

(ii) The strong type $(p,p)$ $(1<p<2)$ estimate of $T_{t}^{\alpha}$ follows from Fefferman-Stein's interpolation theorem \cite{FS} by considering the sequence of operator $S_{z}=T_{t}^{(1-z)/2}$; while 
the strong type $(p,p)$ ($2<p<\infty$) can be obtained by duality. Hence, we finish the proof.
\end{proof}

\section*{Appendix. Proof of Lemma \ref{10225} and Lemma \ref{2002} }
In this appendix, we present the proof of Lemma \ref{10225} and Lemma \ref{2002}.
\begin{proof}[Proof of Lemma \ref{10225}]
	(i) 
	Since the kernel $k_{t}(x,y)$ is the product of one dimensional kernels $k_{t}(x_{j},y_{j})$, it suffices to consider $d=1$. In this case,
	$$k_{t}(x,y)=(\sinh 2t)^{-\frac{1}{2}}e^{-\varphi(t,x,y)},$$
	where $\varphi(t,x,y)=\frac{1}{2}(x-y)^{2}\coth2t+xy\tanh t$.
	It is not difficult to verify that $\frac{1}{4}(x-y)^{2}\coth2t+xy\tanh t$ is nonnegative. Hence we get
	\begin{equation}\label{1080}
		e^{-\varphi(t,x,y)}\leq e^{-\frac{1}{4}(x-y)^{2}\coth2t}.
	\end{equation}

	A simple calculation shows that
	\begin{align*}
		\partial_{t}k_{t}(x,y)
		=&\ -[(\sinh2t)^{-\frac{3}{2}}\cosh2te^{-\varphi(t,x,y)}
		+(\sinh2t)^{-\frac{1}{2}}e^{-\varphi(t,x,y)}\partial_{t}\varphi(t,x,y)]\\
		=:&\ -[A+B],
	\end{align*}
	where
	\begin{equation*}
		\partial_{t}\varphi(t,x,y)=-\frac{(x-y)^{2}}{(\sinh2t)^{2}}+\frac{xy}{(\cosh t)^{2}}.
	\end{equation*}

	Consider $0<t<1$ firstly. Note that  $\sinh t$ behave like $t$ and $\cosh 2t=O(1)$. Then combining with (\ref{1080}), we get
	\begin{equation}\label{1083}
		|A|\lesssim t^{-\frac{3}{2}} e^{-\frac{1}{8}\frac{(x-y)^{2}}{t}}.
	\end{equation}

	To estimate $B$. We decompose $B$ as
	\begin{align*}
		B=&\ -\Big[(\sinh2t)^{-\frac{1}{2}}\frac{(x-y)^{2}}{(\sinh2t)^{2}}e^{-\varphi(t,x,y)}-
		(\sinh2t)^{-\frac{1}{2}}\frac{xy}{(\cosh t)^{2}}e^{-\varphi(t,x,y)}\Big]\\
	=	:&\ -[B_{1}-B_{2}].
	\end{align*}
For term $B_1$, we have
	\begin{equation}\label{10081}
		|B_{1}|\lesssim t^{-\frac{5}{2}}(x-y)^{2}e^{-\frac{1}{8}\frac{(x-y)^{2}}{t}}\lesssim
		t^{-\frac{3}{2}}e^{-\frac{1}{16}\frac{(x-y)^{2}}{t}}.
	\end{equation}

	Now we deal with $B_{2}$. Note that when $xy\geq0$, it is easy to see $xye^{-xy\tanh t}$ is bounded by a constant times $t^{-1}$ and hence
	\begin{equation}\label{1084}
		|B_{2}|\lesssim t^{-\frac{1}{2}}t^{-1} e^{-\frac{1}{16}\frac{(x-y)^{2}}{t}}=t^{-\frac{3}{2}}e^{-\frac{1}{16}\frac{(x-y)^{2}}{t}}.
	\end{equation}
	On the other hand, when $xy<0$, then $|xy|=-xy\leq (x-y)^{2}$ and whence
	\begin{equation}\label{1085}
		|B_{2}|\lesssim t^{-\frac{1}{2}}(x-y)^{2} e^{-\frac{1}{8}\frac{(x-y)^{2}}{t}}\lesssim t^{-\frac{3}{2}}e^{-\frac{1}{16}\frac{(x-y)^{2}}{t}}.
	\end{equation}
	Therefore, by (\ref{10081}), (\ref{1084}) and (\ref{1085}), we find
	\begin{equation}\label{1086}
		|B|\lesssim t^{-\frac{3}{2}}e^{-\frac{1}{16}\frac{(x-y)^{2}}{t}}.
	\end{equation}

	Finally, (\ref{1083}) and (\ref{1086}) give
	\begin{equation*}
		|\partial_{t}k_{t}(x,y)|\lesssim t^{-\frac{3}{2}}e^{-\frac{1}{16}\frac{(x-y)^{2}}{t}}.
	\end{equation*}
	Thus we get the desired estimate when $0<t<1$. The case of $t\geq1$ is quite similar as previous, just noting that for $t\geq1$ both $\sinh2t$ and $\cosh2t$ behave like $e^{2t}$, while this condition is stronger than before. The details are omitted. 
	
	(ii) Let us verify the smoothness condition. Note that $\Phi$ can be rewritten as
	$$\Phi_t(x,y)=-\frac{(x-y)^{2}}{2\sinh 2t}-\frac{(x^{2}+y^{2})\tanh t}{2}.$$
	
	By symmetry, we just need to estimate $\partial_{y}\partial_{t}k_{t}(x,y)$. However, $\partial_{y}\partial_{t}k_{t}(x,y)$ has many terms we indicate how to estimate one typical term:
	$$J=(x-y)^{3}(\sinh 2t)^{-\frac{7}{2}}\cosh 2te^{\Phi(t)}.$$
	
	 For $0<t<1$, using the similar argument as in (i), we deduce
	$$|J|\lesssim t^{-\frac{7}{2}}|x-y|^{3}e^{-\frac{|x-y|^{2}}{8t}}\lesssim t^{-2}e^{-\frac{|x-y|^{2}}{16t}}.$$
	The other terms can be proved in a similar fashion. The case $t\geq1$ can also be obtained without much difficulty. The details are omitted. This completes the proof.	
\end{proof}

\begin{proof}[Proof of Lemma \ref{2002}]
	We only verify conclusion (i). The other three estimates we refer to \cite[Lemma 4.1]{T4}. Consider $0<t\leq\pi/8$ firstly. We claim that
	\begin{equation}\label{10833}
	|K_{t}(x,y,\lambda)|\lesssim\lambda^{-1/2}e^{-c\lambda^{-1}(x-y)^{2}}\leq\lambda^{-1}e^{-c\lambda^{-1}(x-y)^{2}}.
	\end{equation}
Indeed, a simple calculation shows
	$$|\sinh2(\lambda-it)|^{2}=c(\sinh^{2}2\lambda+\sin^{2}2t).$$
	Since $0<t\leq\pi/8$ and $0\leq\lambda\leq1$, $\cos2t\geq2^{-1/2}$, $\sinh2\lambda$ behaves like $\lambda$ and $\cosh2\lambda=O(1)$.  
	Assume further $t\leq\lambda$. Then  $(\sinh^{2}2\lambda+\sin^{2}2t)$ behaves like $\lambda^{2}$. Combining with the observation $\cosh2\lambda-\cos2t\geq0$, we deduce that 
	$$|K_{t}(x,y,\lambda)|\lesssim\lambda^{-1/2}e^{-c\lambda^{-1}(x-y)^{2}}\leq\lambda^{-1}e^{-c\lambda^{-1}(x-y)^{2}}.$$
	Integrating (\ref{10833}) against $\lambda^{-1/2}$, we have
	\begin{eqnarray*}
		\big|\int_{0}^{1}\lambda^{-1/2}
		K_{t}(x,y,\lambda)d\lambda\big|
		& \lesssim & \int_{0}^{1}\lambda^{-3/2}e^{-c\lambda^{-1}(x-y)^{2}}d\lambda\\
		& = &\int_{1}^{\infty}\lambda^{-1/2}e^{-c\lambda(x-y)^{2}}d\lambda\\
		& \leq &\int_{0}^{\infty}\lambda^{-1/2}
		e^{-c\lambda(x-y)^{2}}d\lambda\lesssim|x-y|^{-1}.
	\end{eqnarray*}

	Under the assumption $t>\lambda$,  $(\sinh^{2}2\lambda+\sin^{2}2t)$ behaves like $t^{2}$. Hence,
	$$|K_{t}(x,y,\lambda)|\lesssim t^{-1}e^{-c\lambda t^{-2}(x-y)^{2}}.$$
	Then integrating this against $\lambda^{-1/2}$, we get
	\begin{eqnarray*}
		\big|\int_{0}^{1}\lambda^{-1/2}
		K_{t}(x,y,\lambda)d\lambda\big|
		& \lesssim & t^{-1}\int_{0}^{1}\lambda^{-1/2}e^{-c\lambda t^{-2}(x-y)^{2}}d\lambda\\
		& \leq &t^{-1}\int_{0}^{\infty}\lambda^{-1/2}e^{-c\lambda t^{-2}(x-y)^{2}}d\lambda \lesssim |x-y|^{-1}.
	\end{eqnarray*}
	This proves (i) when $0<t\leq \pi/8$.
	
	We now turn to estimate (i) in the case of $\pi/8<t\leq\pi/4$. Observe that if $t$ is in the neighbourhood of $\pi/4$ we can use
	$$(\sinh^{2}2\lambda+\sin^{2}2t)^{-1}(\sinh2\lambda)(\cosh2\lambda-
	\cos2t)(x^{2}+y^{2})$$
	in place of
	$$(\sinh^{2}2\lambda+\sin^{2}2t)^{-1}(\sinh2\lambda)
	\cos2t(x-y)^{2}$$
	since $(\cosh2\lambda-\cos2t)\geq1-\cos2t\geq C$. The following argument is the same as the case of $0<t\leq\pi/8$. The completes the proof.
\end{proof}

\noindent {\bf Acknowledgements} \

\medskip

I am very grateful to my supervisor Guixiang Hong for many valuable suggestions on this topic,
his guidance throughout the making of this paper, Dr. Liang Wang for the useful discussion.  I would like to thank the referees for their very
careful reading and valuable comments.

\end{document}